\newtheorem{theorem}{Theorem}
\newtheorem{prop}[theorem]{Proposition}
\newtheorem{coro}[theorem]{Corollary}
\newtheorem{remark}[theorem]{Remark}
\newtheorem{definition}[theorem]{Definition}
\newtheorem{example}[theorem]{Example}
\newcommand{\TC}{\mathord{\mathrm{TC}}}
\newcommand{\zcl}{\mathord{\mathrm{zcl}}}
\newcommand{\wTC}{\mathord{\mathrm{wTC}}}
\newcommand{\cTC}{\mathord{\mathrm{conil}_{\TC}}}
\newcommand{\cat}{\mathord{\mathrm{cat}}}
\newcommand{\wcat}{\mathord{\mathrm{wcat}}}
\newcommand{\conil}{\mathord{\mathrm{conil}}}
\newcommand{\cl}{\mathord{\mathrm{cl}}}
\newcommand{\inside}{\mathord{\mathrm{int}}}
\newcommand{\pr}{\mathord{\mathrm{pr}}}
\newcommand{\ev}{\mathord{\mathrm{ev}}}
\newcommand{\pp}{\ltimes}
\newcommand{\nil}{\mathord{\mathrm{nil}}}
\newcommand{\iso}{\cong}
\title{Lower bounds for topological complexity}
\author[Aleksandra Franc]{Aleksandra Franc}
\address{Faculty of Computer and Information Science, University of Ljubljana\newline\indent Tr\v{z}a\v{s}ka 25\newline\indent 1000 Ljubljana, Slovenia}
\email{\rm{aleksandra.franc@fri.uni-lj.si}}
\author[Petar Pave\v{s}i\'c]{Petar Pave\v{s}i\'c}
\address{Faculty of Mathematics and Physics, University of Ljubljana\newline\indent Jadranska 21\newline\indent 1000 Ljubljana, Slovenia}
\email{\rm{petar.pavesic@fmf.uni-lj.si}}
\thanks{This work was supported by the Slovenian Research Agency grant P1-0292-0101; the first author was fully supported under contract No. 1000-07-310002.}
\keywords{topological complexity, fibrewise Lusternik-Schnirelmann category}
\subjclass[2010]{55R70, 55M30}
\begin{document}
\begin{abstract}
We introduce fibrewise Whitehead and Ganea definitions of monoidal topological complexity. We then define 
several lower bounds which improve on the standard lower bound in terms of nilpotency of the cohomology 
ring. Finally, the relationships between these lower bounds are studied.
\end{abstract}
\maketitle


\section{Introduction}

The notion of topological complexity was first introduced by Farber \cite{Farber:TC}. It is a homotopy invariant 
that for a given space $X$ measures the complexity of the problem of determining a path in $X$ connecting two points 
in a manner that is continuously dependent on the end-points. In order to give a formal definition observe that the 
map that assigns to a path $\alpha\colon I\to X$ its end-points $\alpha(0)$ and $\alpha(1)$ determines the 
\emph{path-fibration} $\ev_{0,1}\colon X^I\rightarrow X\times X$. A continuous choice of paths between given 
end-points corresponds to a continuous section of this fibration.  

\begin{definition}\label{FarberDef}
{\em Topological complexity} $\TC(X)$ of a space $X$ is the least integer $n$ for which there exist an open 
cover $\{U_1, U_2,\ldots, U_n\}$ of $X\times X$ and sections $s_i\colon U_i\rightarrow X^I$ of the fibration 
$\ev_{0,1}\colon X^I\rightarrow X\times X$, $\alpha\mapsto (\alpha(0),\alpha(1))$.
\end{definition}

$$\xymatrix{
 & X^I\ar[d]^-{\ev_{0,1}}\\
U_i\ar@{-->}[ur]^-{s_i} \ar[r]_-{\subseteq} & X\times X
}$$

If we additionally require that $s_i(x,x)=c_x$ (i.e. the sections map into constant paths over the diagonal $\Delta(X)$), 
we get the definition of the monoidal topological complexity $\TC^M$ of Iwase and Sakai \cite[Definition 1.3]{IS}.

Topological complexity and its variants are still an intensely studied topic. For a sampler of results one can consult 
Chapter 4 of Farber's book \cite{Farber:Robotics}.

Clearly, $\TC(X)=1$ if and only if $X$ is contractible. In general, the determination of the topological 
complexity is a non-trivial problem even for simple spaces like the spheres. The standard approach is to 
find estimates of the topological complexity in the form of upper and lower bounds that are more easily 
computable. With some luck we then obtain a precise value of $\TC$ or at least a restricted list of possibilities.

The most common upper bounds are based on the Lusternik-Schnirelmann category (LS-category) of the product 
space $X\times X$, and on certain product inequalities. On the other side, the standard lower bound for the 
topological complexity is the \emph{zero-divisors cup length} $\zcl(X)$ introduced by Farber \cite{Farber:TC} 
and defined as follows. One takes the cohomology of $X$ with coefficients in some field $k$ and defines the 
\emph{zero-divisors cup length} $\zcl_k(X)$ to be the biggest $n$ such that there is a non-trivial product 
of $n$ elements in the kernel of the cup-product map 
$$\smile\colon H^*(X;k)\otimes H^*(X;k)\to H^*(X;k).$$

We will show that the zero-divisors cup length is only one (and in fact the coarsest) of the lower bounds for 
the topological complexity that can be defined using the characterization of (monoidal) topological complexity by Iwase 
and Sakai \cite{IS}. Their approach is more geometric - they view the monoidal topological complexity (defined below) as 
a special case of the {\em fibrewise pointed Lusternik-Schnirelmann category} as defined by James and Morris 
\cite{JamesMorris}.

In general, a {\em fibrewise pointed space} over a {\em base} $B$ is a topological space $E$, together 
with a {\em projection} $p\colon E\rightarrow B$ and a {\em section} $s\colon B\rightarrow E$. 
Fibrewise pointed spaces over a base $B$ form a category and the notions of fibrewise pointed maps and 
fibrewise pointed homotopies are defined in an obvious way. We refer the reader to \cite{James} and 
\cite{JamesMorris} for more details on fibrewise constructions.

Iwase and Sakai \cite{IS} considered the product $X\times X$ as a fibrewise pointed space over $X$ by taking 
the projection to the first component and the diagonal section $\Delta\colon X\rightarrow X\times X$ as in 
the diagram
$$\xymatrix{
X\times X \ar@<0.5ex>[d]^-{\pr_1}\\
X \ar@<0.5ex>[u]^-{\Delta}
}$$
We will denote this fibrewise pointed space by $X\pp X$. In this case a fibrewise pointed homotopy is any 
homotopy $H\colon X\times X\times I\rightarrow X\times X$ that fixes the first coordinate and is stationary 
on $\Delta(X)$ (i.e. $H(x,y,t)=(x,h(x,y,t))$ for some homotopy $h\colon X\times X\times I\rightarrow X$ that 
is stationary on $\Delta(X)$). The following result of Iwase and Sakai \cite{IS} gives us alternative characterizations 
of $\TC$ and $\TC^M$:

\begin{theorem}\label{IwaseDef}
The topological complexity $\TC(X)$ of $X$ is equal to the least integer $n$ for which there exists an open cover 
$\{U_1, U_2,\ldots, U_n\}$ of $X\times X$ such that each $U_i$ is compressible to the diagonal via a fibrewise homotopy.

The monoidal topological complexity $\TC^M(X)$ of $X$ is equal to the least integer $n$ for which 
there exists an open cover $\{U_1, U_2,\ldots, U_n\}$ of $X\times X$ such that each $U_i$ contains the 
diagonal $\Delta(X)$ and is compressible to the diagonal via a fibrewise pointed homotopy.
\end{theorem}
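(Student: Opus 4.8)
The plan is to exhibit an explicit, mutually inverse correspondence between sections of the path fibration over an open set $U\subseteq X\times X$ and fibrewise compressions of $U$ to the diagonal, and then to read off the claimed equalities by comparing the minimal cardinalities of the two kinds of open covers. The single technical tool underlying the whole argument is the exponential adjunction $\mathrm{Map}(U\times I,X)\cong\mathrm{Map}(U,X^I)$, which is available because $I$ is compact Hausdorff; this is what guarantees that the section built from a homotopy (and conversely the homotopy built from a section) is again continuous.

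First I would prove the key lemma: for an open $U\subseteq X\times X$ there exists a section $s\colon U\to X^I$ of $\ev_{0,1}$ if and only if the inclusion $U\hookrightarrow X\times X$ is fibrewise homotopic, over $\pr_1$, to a map whose image lies in $\Delta(X)$. Given a section $s$, I set $H((x,y),t)=(x,\,s(x,y)(1-t))$. Since $s(x,y)$ is a path from $x$ to $y$, one checks $H((x,y),0)=(x,y)$ and $H((x,y),1)=(x,x)\in\Delta(X)$, while $\pr_1\circ H=\pr_1$ by construction, so $H$ is a fibrewise compression of $U$ to the diagonal. Conversely, any fibrewise homotopy has the form $H((x,y),t)=(x,h((x,y),t))$ with $h((x,y),0)=y$ and $h((x,y),1)=x$; the formula $s(x,y)(t)=h((x,y),1-t)$ then defines a path from $x$ to $y$, i.e.\ a section of $\ev_{0,1}$. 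A direct substitution shows these two assignments are mutually inverse.

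With the lemma in hand the first statement is immediate: any open cover realising $\TC(X)$ furnishes sections, hence fibrewise compressions, and conversely any cover by fibrewise-compressible sets furnishes sections, so $\TC(X)$ equals the minimal size of a cover by fibrewise-compressible sets. For the monoidal statement I would trace the two extra conditions through the same correspondence. The requirement $s_i(x,x)=c_x$ translates, under $H((x,y),t)=(x,s(x,y)(1-t))$, into $H((x,x),t)=(x,x)$ for all $t$, which is precisely the condition that $H$ be a fibrewise \emph{pointed} homotopy (stationary on the section $\Delta(X)$); the converse direction reads off identically. Moreover, since a pointed homotopy must fix the entire section, each $U_i$ in the monoidal cover is forced to contain $\Delta(X)$, matching the hypothesis in the statement. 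Comparing the minimal pointed covers on the two sides then yields the equality for $\TC^M(X)$.

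The argument is essentially a bookkeeping exercise once the correspondence is set up, so I do not expect a serious conceptual obstacle. The two points that need genuine care are the continuity of the constructed section and homotopy, handled uniformly by the exponential law, and, in the monoidal case, verifying that the containment $\Delta(X)\subseteq U_i$ is both forced by the existence of the pointed homotopy and automatically preserved by the correspondence, so that the two minimal-cover problems are literally the same.
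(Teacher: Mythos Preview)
Your argument is correct and is in fact the standard proof of this equivalence via the exponential adjunction. Note, however, that the paper does not supply its own proof of this theorem: it is quoted as a result of Iwase and Sakai with a reference to \cite{IS}, so there is nothing in the paper to compare your argument against.

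One small remark on the monoidal case: the containment $\Delta(X)\subseteq U_i$ is part of Iwase and Sakai's definition of $\TC^M$ (and the paper's informal description via $s_i(x,x)=c_x$ tacitly presupposes it), rather than something to be derived. Your observation that a fibrewise pointed homotopy on $U_i$ only makes sense when $\Delta(X)\subseteq U_i$ is correct, but you should present this as a consistency check showing the two formulations match, not as a deduction that produces the hypothesis. With that adjustment the correspondence you set up is exactly the one underlying the Iwase--Sakai result.
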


In \cite{IS} Iwase and Sakai have conjectured that $\TC(X)=\TC^M(X)$ if $X$ is a locally finite simplicial complex. At 
this time a general proof of this conjecture is not known, but they give a slightly weaker result in 
\cite[Theorem 2]{ISerrata}:

\begin{theorem}
Let $X$ be a locally finite simplicial complex. Assume that $\TC(X)=n$ and $\{U_1,\ldots,U_n\}$ is a $\TC$-categorical 
cover of $X\times X$. In the following two cases we have $\TC^M(X)=\TC(X)$:
\begin{enumerate}
\item There exists $j$, $1\leq j\leq n$, such that $U_j$ does not intersect the diagonal $\Delta(X)$ or
\item there exists $j$, $1\leq j\leq n$, such that $U_j$ contains $\Delta(\pr_1(U_j))$.
\end{enumerate}
\end{theorem}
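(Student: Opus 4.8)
The plan is to prove $\TC^M(X)\le n$; combined with the general inequality $\TC(X)\le\TC^M(X)$ — which follows from Theorem~\ref{IwaseDef}, since a monoidal categorical cover is in particular a $\TC$-categorical cover (a fibrewise pointed homotopy is a fibrewise homotopy) — this forces $\TC^M(X)=n$. So I would start from the given cover $\{U_1,\dots,U_n\}$, in which each $U_i$ is compressed to the diagonal by a fibrewise homotopy $(x,y)\mapsto\big(x,h_i(x,y,t)\big)$, and manufacture from it a monoidal cover of the same size, i.e. $n$ open sets each containing $\Delta(X)$ and compressible to the diagonal by a fibrewise \emph{pointed} homotopy. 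The recurring obstruction is that a set meeting the diagonal carries a compression that drags each diagonal point $(x,x)$ around a loop $t\mapsto\big(x,h_i(x,x,t)\big)$, and the whole point is to absorb a full neighbourhood of $\Delta(X)$ into a single one of the $n$ sets, so as not to spend the extra set that the bound $\TC^M\le\TC+1$ would otherwise cost. To this end I fix, using that $X\times X$ is a locally finite simplicial complex, a nested family of open halos of $\Delta(X)$, each admitting a fibrewise deformation retraction onto $\Delta(X)$ that is stationary on $\Delta(X)$; the largest of them, $N$, is itself a monoidal set.

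Case~(1), where $U_j\cap\Delta(X)=\emptyset$, should be the clean one. After shrinking $N$ so that $\overline{N}\cap U_j=\emptyset$, I put $V_j:=U_j\sqcup N$: a disjoint union of open sets, containing $\Delta(X)$, whose compression is that of $U_j$ (vacuously stationary on the diagonal it misses) together with the pointed retraction of $N$; hence $V_j$ is monoidal. For $i\neq j$ I push $U_i$ away from the diagonal, replacing it by $(U_i\setminus\overline{N''})\sqcup N'''$, where $N''\supseteq\overline{N'''}$ are smaller halos with $\overline{N''}\subseteq N$; since $U_i\setminus\overline{N''}$ misses $\Delta(X)$ its inherited compression is vacuously pointed, the attached $N'''$ is disjoint from it, and $N'''\supseteq\Delta(X)$. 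A routine check that $\overline{N''}\subseteq V_j$ and that outside $\overline{N''}$ the original sets still cover shows that $\{V_1,\dots,V_n\}$ is a monoidal cover of size $n$.

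Case~(2) is where I expect the real work. Now $U_j$ meets the diagonal, but the saturation hypothesis $U_j\supseteq\Delta(\pr_1 U_j)$ guarantees that whenever $(x,y)\in U_j$ the point $(x,x)$ also lies in $U_j$, so the loop $\ell_x(t):=h_j(x,x,t)$ traced by the compression over $(x,x)$ is defined and varies continuously over $\pr_1 U_j$. The first move is to repair the compression of $U_j$: concatenating $h_j(x,y,\cdot)$ (which runs from $y$ to $x$) with $\overline{\ell_x}$ yields a homotopy that still terminates on the diagonal but equals $\ell_x\ast\overline{\ell_x}$ over each diagonal point. The genuinely delicate step — the one I flag as the main obstacle — is to contract these loops $\ell_x\ast\overline{\ell_x}$ to constants coherently in $x$ and compatibly with the off-diagonal behaviour as $y\to x$, so as to turn the repaired homotopy into one that is \emph{stationary} on $\Delta(\pr_1 U_j)$; this is exactly where the local contractibility (ANR structure) of the simplicial complex and a careful reparametrisation of the canonical null-homotopy of $\gamma\ast\overline{\gamma}$ are needed. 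Having upgraded $U_j$ to a set with a pointed compression, I set $V_j:=U_j\cup N$ and glue its pointed compression to the pointed retraction of the full halo $N$ along the overlap (both being stationary on $\Delta(X)$, they can be interpolated), so that $V_j$ contains all of $\Delta(X)$ and is monoidal. The remaining $U_i$ are then pushed off the diagonal and fitted with small disjoint halos exactly as in Case~(1); since $V_j$ now carries the full diagonal neighbourhood $N$, no extra open set is required, and $\{V_1,\dots,V_n\}$ is the desired monoidal cover.
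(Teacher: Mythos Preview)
The paper does not supply its own proof of this statement: it is quoted verbatim as \cite[Theorem~2]{ISerrata} and attributed to Iwase and Sakai, so there is nothing in the present paper to compare your argument against. Your outline is a plausible reconstruction of what such a proof must do, and Case~(1) is essentially complete once you note that normality of the simplicial complex lets you separate $\Delta(X)$ from the open set $U_j$ disjoint from it, so that a sufficiently small halo $N$ really is disjoint from $U_j$.

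Case~(2), however, is only a programme. You correctly isolate the crux---turning the concatenated compression, which traces $\ell_x*\overline{\ell_x}$ over each diagonal point, into one that is genuinely stationary there---but you do not carry it out. The canonical null-homotopy of $\gamma*\overline{\gamma}$ is indeed continuous in $\gamma$, so it gives a vertical homotopy of $\tilde s_j|_{U_j\cap\Delta(X)}$ to the constant-path section; the substantive missing step is extending this to a homotopy \emph{through sections} on all of $U_j$, which requires the relative homotopy lifting property for the fibration $\ev_{0,1}$ against the NDR pair $(U_j,\,U_j\cap\Delta(X))$. You then still owe a gluing of the resulting pointed section on $U_j$ with the halo retraction on $N$ over the overlap $U_j\cap N$, where the two sections need not agree and need not even be fibre-homotopic without further argument (this is where one really uses that both are already $c_x$ on the diagonal and appeals to continuity plus the ANR structure to force agreement up to homotopy on a thin enough collar). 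Until those two lifting/gluing steps are written down, Case~(2) remains a sketch rather than a proof.
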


These additional assumptions may be difficult to verify if $\TC$ is known but no explicit cover is given, or if the 
cover provided does not satisfy the conditions, but they also demonstrate that $\TC^M$ and $\TC$ cannot differ by 
more than one \cite[Theorem 1]{ISerrata}:

\begin{theorem}\label{bounds}
Let $X$ be a locally finite simplicial complex. Then $$\TC(X)\leq\TC^M(X)\leq\TC(X)+1.$$
\end{theorem}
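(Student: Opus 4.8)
The left inequality is immediate from the definitions and requires no hypothesis on $X$. A cover realizing $\TC^M(X)$ is an open cover $\{U_1,\dots,U_n\}$ of $X\times X$ together with sections $s_i\colon U_i\to X^I$ of $\ev_{0,1}$ satisfying the extra normalization $s_i(x,x)=c_x$ over diagonal points. Forgetting the normalization, this is precisely an ordinary categorical cover in the sense of Definition~\ref{FarberDef}, so any cover witnessing $\TC^M(X)$ also witnesses $\TC(X)\le\TC^M(X)$.

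For the right inequality the plan is to start from an optimal categorical cover $\{U_1,\dots,U_n\}$ of $X\times X$ with $n=\TC(X)$ and sections $\sigma_i\colon U_i\to X^I$, and then to repair the normalization on the diagonal at the cost of a single extra open set. First I would produce an open set $W\supseteq\Delta(X)$ that is compressible to the diagonal via a fibrewise \emph{pointed} homotopy; by the monoidal case of Theorem~\ref{IwaseDef} such a $W$ carries a section $s_0\colon W\to X^I$ with $s_0(x,x)=c_x$. Since $X$ is metrizable, $X\times X$ is normal, so I may choose an open $V$ with $\Delta(X)\subseteq V\subseteq\overline V\subseteq W$ and set $U_i':=U_i\setminus\overline V$. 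Each $U_i'$ is open and disjoint from $\Delta(X)$, so the restricted section $\sigma_i|_{U_i'}$ satisfies the normalization $\sigma_i(x,x)=c_x$ vacuously; moreover $\{W,U_1',\dots,U_n'\}$ still covers $X\times X$, because any point lying in $\overline V\subseteq W$ is covered by $W$, while any point outside $\overline V$ lies in some original $U_j$ and hence in $U_j'$. Together with $s_0$ over $W$ this exhibits a monoidal categorical cover of cardinality $n+1$, giving $\TC^M(X)\le\TC(X)+1$.

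The main obstacle is the construction of the distinguished neighborhood $W$ together with its fibrewise pointed compression, and this is the only place where the hypothesis that $X$ is a locally finite simplicial complex is used. What is needed is a homotopy $h\colon W\times I\to X$, continuous in $(x,y)\in W$, with $h((x,y),0)=y$, $h((x,y),1)=x$ and $h((x,x),t)=x$ for all $t$; this is exactly a diagonal-stationary fibrewise deformation of $W$ onto $\Delta(X)$, and it yields the normalized section by setting $s_0(x,y)$ to be the path $t\mapsto h((x,y),1-t)$, which runs from $x$ to $y$ and collapses to $c_x$ on the diagonal. I would build $h$ from the simplicial structure: using local finiteness, a point $(x,y)$ sufficiently near the diagonal has $x$ and $y$ lying in a common closed simplex (equivalently, in the star of a common vertex), and one may then contract $y$ to $x$ along the straight-line path in barycentric coordinates. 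The delicate part is patching these simplexwise contractions into a single globally continuous homotopy that is stationary on $\Delta(X)$; here local finiteness is essential, since it makes the relevant cover of $W$ by products of stars locally finite and reduces continuity to a controlled, skeleton-by-skeleton verification.

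What remains after $W$ is constructed is formal: checking that $W$ can be taken large enough to admit the nested open set $V$ with $\overline V\subseteq W$ (a normality argument in the paracompact space $X\times X$), and checking that passing from $\sigma_i$ to $\sigma_i|_{U_i'}$ preserves openness and the covering property, which I verified above. Combining the two inequalities yields $\TC(X)\le\TC^M(X)\le\TC(X)+1$.
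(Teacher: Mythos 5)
First, a remark on the comparison itself: the paper does not prove this statement at all --- it is quoted verbatim from Iwase--Sakai's erratum (\cite[Theorem 1]{ISerrata}) --- so there is no in-paper proof to measure your argument against; I am judging it on its own and against the known argument. Your overall architecture (the left inequality by forgetting the normalization; the right one by adjoining a single fibrewise pointed categorical neighbourhood $W$ of the diagonal and cutting the original $\TC$-cover away from $\cl(V)\subseteq W$) is the correct strategy. But there are two gaps. The smaller one: the definition of $\TC^M$ actually in force in this paper (Theorem \ref{IwaseDef}, and the notion of a ``fibrewise pointed categorical'' set) requires \emph{every} member of the cover to contain $\Delta(X)$, whereas your sets $U_i'=U_i\setminus\overline{V}$ are disjoint from the diagonal, so $\{W,U_1',\ldots,U_n'\}$ is not a $\TC^M$-cover in that sense. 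This is repairable: choose a further open set $V'$ with $\Delta(X)\subseteq V'\subseteq\overline{V'}\subseteq V$ and replace $U_i'$ by $U_i'':=U_i'\cup V'$; since $U_i'$ and $V'$ are disjoint open sets, the section equal to $\sigma_i$ on $U_i'$ and to $s_0$ on $V'$ is continuous, normalized on the diagonal, and each $U_i''$ now contains $\Delta(X)$. You should either carry out this step or argue explicitly that the ``vacuous normalization'' definition is equivalent.

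The serious gap is the construction of $W$, which is the entire mathematical content of the theorem, and your sketch of it does not work as stated. The set of pairs $(x,y)$ lying in a common closed simplex is in general \emph{not} a neighbourhood of the diagonal: already for two $1$-simplices sharing a vertex $v$, pairs $(x,y)$ with $x$ and $y$ on different edges and arbitrarily close to $v$ lie in no common simplex, so straight-line contraction in barycentric coordinates is undefined on any neighbourhood of $(v,v)$. Producing an open $W\supseteq\Delta(X)$ with a compression that is simultaneously continuous, fibrewise over $\pr_1$, and stationary on $\Delta(X)$ requires genuinely more (e.g.\ a triangulation of $X\times X$ in which $\Delta(X)$ is a subcomplex, a regular neighbourhood argument, and then an additional step to make the deformation first-coordinate-preserving); this is precisely the nontrivial lemma of the Iwase--Sakai erratum, and notice that the present paper itself sidesteps it by \emph{assuming} the existence of such a neighbourhood as a hypothesis in Theorem \ref{White}. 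You correctly flag this as the delicate point, but since it is where the hypothesis on $X$ and all the real work enter, the proof cannot be considered complete without it.
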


In view of this result all the lower bounds we define for the monoidal topological complexity can also be viewed as 
(possibly slightly weaker) lower bounds for topological complexity.

An open set $U\subseteq X\times X$ is said to be {\em fibrewise pointed categorical} if it contains the diagonal 
$\Delta(X)$ and is compressible onto it by a fibrewise pointed homotopy. We may therefore say that $TC^M(X)\le n$ 
if $X\times X$ can be covered by $n$ fibrewise pointed categorical sets, a formulation that is completely 
analogous to the definition of the LS-category. The principal objective of this paper is to develop further 
this point of view and to extend some standard constructions from the LS-category to the new context. In 
particular, we will give a systematic account of lower bounds for the monoidal topological complexity that closely 
follows analogous bounds for the LS-category.

In the first section we will describe fibrewise pointed spaces that will play a role in the later exposition. 
The notation is chosen so as to stress the relation with the fibrewise pointed space $X\pp X$ used in the 
Iwase-Sakai definition. The common framework that justifies the same notation for seemingly disparate constructions 
is given in the Appendix.

In the second section we will describe the Whitehead- and the Ganea-type characterizations of the monoidal topological 
complexity. The former was already known: it appears in \cite{IS} and can be in fact traced back at least to 
\cite[Section 6]{JamesMorris}. As for the latter, this phenomenon has been observed in the case of topological 
complexity which is equivalent to the Schwarz genus \cite{Schwarz} of the path-fibration, a characterization of the 
topological complexity that appears in the original paper \cite{Farber:TC} by Farber. We use both characterizations 
in order to construct a diagram of fibrewise pointed spaces needed to describe lower bounds for the monoidal 
topological complexity.

In the last section we describe a series of estimates for the monoidal topological complexity and derive principal 
relations between them. Variants of some of these lower bounds for topological complexity have already been 
considered in \cite[Section 8]{IS} and \cite{GV}.

Throughout this paper $1$ is used to denote the identity map when the domain can be inferred from the context, 
$\Delta_n\colon A\rightarrow A^n$ denotes the diagonal map $a\mapsto (a,\ldots,a)$ (we write $\Delta$ instead 
of $\Delta_2$ to unburden the already encumbered notation) and $\pr_1\colon A\times B\rightarrow A$ denotes 
the projection to the first component for any spaces $A$, $B$. The interior and the closure of the set $A$ are denoted 
by $\inside(A)$ and $\cl(A)$, respectively. The symbol $\mathbb{N}$ will denote the set of 
positive integers and $\mathbb{N}_0:=\mathbb{N}\cup\{0\}$.


\section{Pointed constructions and fibrewise pointed spaces}\label{sec2}

Several lower bounds for the LS-category can be derived from a diagram that relates the Whitehead and the Ganea 
characterization (see Section 1.6 and Chapter 2 of \cite{CLOT}). To obtain analogous estimates for the monoidal topological 
complexity, we must first understand the fine structure of the fibrewise pointed spaces involved. In all cases 
we obtain fibrewise pointed spaces that can be seen as continuous families of pointed topological spaces whose 
base-points are parametrized by the points of the base space. 

We are going to describe a number of fibrewise pointed spaces that play a role in the fibrewise Whitehead and 
fibrewise Ganea characterizations of $\TC^M$. These are of course only special instances of general fibrewise pointed 
constructions described in \cite[Chapter 2]{Crabb-James}, but in our case they allow a very explicit description 
that will play a role in our exposition. In spite of the different definitions these spaces share some basic 
features and this motivates the common notation. We will provide further justification for this decision in 
the Appendix, where we will describe a general construction, which under favourable circumstances subsumes 
all the above. For now we assume that $(X\times X,\Delta(X))$ is an NDR pair. This condition ensures that $\TC^M$ 
is a homotopy invariant (cf. \cite[Remark 1.4]{IS}).

\ \\
\textbf{Pointed space:}
Let $X\pp X$ denote the fibrewise pointed space 
$X\stackrel{\Delta}{\longrightarrow}X\times X\stackrel{\pr_1}{\longrightarrow}X$. As a space $X\pp X$ is 
therefore just the product $X\times X$ but the choice of the section means that the fibre over each $x\in X$ 
is the pointed topological space $(X,x)$ rather than $(X,x_0)$ for some fixed $x_0\in X$.

\ \\
\textbf{Product:}
\label{ex product}
The categorical product in the category of fibrewise spaces is given by the pull-back construction over the base 
space. If we compute the fibrewise pointed $n$-fold product of $X\pp X$ we obtain the space 
$$\{(x_1,y_1,\ldots,x_n,y_n)\in X^{2n}\;|\;x_1=\ldots=x_n\},$$
with the projection 
$pr_1\colon (x_1,y_1,\ldots,x_n,y_n)\mapsto x_1$ and the diagonal section $\Delta_{2n}$. Since the odd-numbered 
coordinates in the product coincide we may instead take the space
$$\{(x,y_1,\ldots,y_n)\;|\;x, y_i\in X\}=X\times X^n,$$
which is a fibrewise pointed space with respect to the projection $\pr_1$ and the section 
$(1,\Delta_n)\colon X\rightarrow X\times X^n$. We denote this \emph{fibrewise pointed $n$-fold product} by 
$X\pp\Pi^nX$. Observe that the fibre over $x\in X$ is the pointed product space $(X^n,(x,\ldots,x))$. 

\ \\
\textbf{Fat wedge:}
From the $n$-fold product we may extract the \emph{fibrewise pointed fat wedge} by taking the subspace 
$$\{(x,y_1,\ldots,y_n)\in X\pp\Pi^nX\;|\;\exists j:y_j=x\}$$
and restricting the  projection and the section 
accordingly. This fibrewise pointed space is denoted $X\pp W^nX$. The fibre over $x\in X$ is the usual fat 
wedge
$$W^n(X,x)=\{(x_1,\ldots,x_n)\in X^n\;|\;\exists j:x_j=x\}.$$
Observe that, contrary to the first two examples, 
the fibres of $X\pp W^n X$ are in general not homeomorphic. This shows that $X\pp W^nX$ might not be locally trivial. 
There is an obvious fibrewise pointed inclusion $X\pp W^nX\hookrightarrow X\pp \Pi^nX$ (denoted $1\pp i_n$). 

\ \\
\textbf{Smash product:}
The \emph{$n$-fold fibrewise pointed smash product} $X\pp \wedge^nX$ is obtained by taking the fibrewise quotient of 
$X\pp \Pi^nX$ by its subspace $X\pp W^nX$. The projection $\pr \colon X\pp \wedge^nX\to X$ and the section 
$s\colon X \to X\pp \wedge^nX$ are induced by the projection and the section in the $n$-fold product. The quotient 
map $X\pp\Pi^nX\to X\pp\wedge^nX$ is denoted $1\pp q_n$, and the sequence of fibrewise maps and spaces 
$$\xymatrix{X\pp W^nX  \ar@{^{(}->}[r]^{1\pp i_n} & X\pp \Pi^n X \ar[r]^{1\pp q_n} & X\pp \wedge^n X}$$ 
is a fibrewise cofibration in the sense of \cite{Crabb-James}.

\ \\
\textbf{Path space:} 
Let $\ev_0\colon X^I\to X$ be the projection that to every path in $X$ assigns its starting point, and let 
$c\colon X\to X^I$ be the map that to every $x\in X$ assigns the stationary path at the point $x$. The fibre of 
$\ev_0$ over any $x\in X$ is precisely $P_xX$, the space of paths in $X$ based at the point $x$. It is therefore 
natural to denote by $X\pp PX$ the fibrewise pointed space 
$X\stackrel{c}{\longrightarrow} X^I \stackrel{\ev_0}{\longrightarrow}X$. Observe that the fibration 
$\ev_{0,1}\colon X^I\to X\times X$ determines the fibrewise pointed fibration $X\pp PX\to X\pp X$.

\ \\
\textbf{Ganea space:}
By analogy with the above examples one would expect the fibrewise Ganea space $X\pp G_n(X)$ to be a continuous 
family of Ganea spaces $G_n(X)$ (see \cite[Section 1.6]{CLOT}) with the base-points varying as prescribed by a 
section, a map from the underlying base space $X$. This will indeed be the case.

There are several equivalent definitions of the Ganea spaces. The standard Ganea construction is done 
inductively. Start with the fibration $p\colon E\rightarrow B$ with fibre $F$ and replace it by the quotient fibration
$p\colon E/F\rightarrow B$. A theorem by Ganea then states that the fibre of this new fibration is the join 
$E*\Omega B$ of the total space $E$ and the space of loops $\Omega B$ over the base $B$.

The path fibration $\ev_1\colon PX\to X$ is defined to be the first Ganea fibration $p_1\colon G_1(X)\to X$ for the space $X$ with fibre 
$F_1(X)=\Omega X$, and the $(n+1)$st Ganea fibration $p_{n+1}\colon G_{n+1}(X)\to X$ is obtained recursively by applying the Ganea 
construction to the fibration $p_n$. This consists in replacing the induced map $q_n\colon G_n(X)\cup C(F_n(X))\to X$ with a fibration in the 
usual way (see \cite[Definition 1.59]{CLOT}).

For practical reasons, however, we prefer an alternative description of the Ganea space based on the sum-of-fibrations 
operation introduced by Schwarz \cite{Schwarz}. Given two fibrations $p_1\colon E_1\rightarrow B$ and 
$p_2\colon E_2\rightarrow B$, Schwarz defines their sum $p\colon E\rightarrow B$ with
$$E=\{(e_1,t_1,e_2,t_2)\in E_1\times I\times E_2\times I\;|\;p_1(e_1)=p_2(e_2),\max\{t_1,t_2\}=1\}/\sim$$
and $p(e_1,t_1,e_2,t_2)=p_1(e_1)=p_2(e_2)$, where
$$(e_1,0,e_2,1)\sim (e'_1,0,e_2,1)\quad {\rm and}\quad (e_1,1,e_2,0)\sim (e_1,1,e'_2,0).$$
This is the fibrewise join in the category of fibrewise spaces over B.

To obtain the Ganea spaces using this construction, we start with the path fibration $\ev_1\colon PX\to X$ 
(i.e. we consider $PX$ as fibrewise space over $X$) and define the $n$th Ganea fibration to be the $n$-fold fibrewise 
join $PX*_X\ldots*_X PX\to X$. 

The last formulation has a natural fibrewise extension. Instead of $\ev_1\colon PX\rightarrow X$ we use 
$\ev_{0,1}\colon X^I\rightarrow X\times X$ (i.e. the fibrewise pointed fibration $X\pp PX\to X\pp X$ from the previous 
example). We define
$$X\pp G_n(X) = *^{n}_{X\times X}(\ev_{0,1}\colon X^I\rightarrow X\times X)$$
as the $n$-fold fibrewise join. This is a fibrewise pointed space over $X$ with the projection 
$\pr_1\circ *^n_{X\times X}\ev_{0,1}$ and the section $\widehat{G}_n\colon X\rightarrow G_n(X)$ that assigns 
to each $x\in X$ the point in the total space determined by the stationary path in $x$. 

If for any $x\in X$ we restrict this construction to the fibre over $x$, we get the Ganea space $G_n(X,x)$, obtained 
as the $n$-fold fibrewise join of $\ev_1\colon P(X,x)\to (X,x)$, so the fibres are precisely the previously described 
Ganea spaces, and that justifies the notation $X\pp G_n(X)$. 

A third alternative construction of the Ganea spaces obtains them as the pull-back of the Whitehead diagram. This approach 
will be considered in the next section.


\section{Whitehead- and Ganea-type characterizations of $\TC^M$}\label{secWhite}

In this section we use this fibrewise approach to obtain characterizations of the monoidal topological 
complexity that are completely analogous to the classical Whitehead and Ganea characterizations of the LS-category. 
This was partly done in \cite{IS} but we are able to take a step further and use the fibrewise pointed spaces 
described in the preceding section to obtain the fibrewise pointed version of the standard diagram that relates 
the two characterizations (cf. \cite[Section 1.4]{CLOT}). That diagram will be used in Section \ref{sec:bounds} 
to give a unified treatment and comparison of various estimates of the monoidal topological complexity.

We begin with the Whitehead-type characterization. As usual we need some mild topological assumption to prove the 
equivalence with the original definition. 

\begin{theorem}\label{White}
Let $X\times X$ be a simplicial complex and assume that there exists a fibrewise pointed categorical neighbourhood 
$U$ of the diagonal $\Delta(X)$. Then $\TC^M(X)\leq n$ if and only if the map 
$1\pp\Delta_n\colon X\pp X\rightarrow X\pp\Pi^nX$ can be compressed into $X\pp W^nX$ by a fibrewise pointed homotopy.
$$\xymatrix{
 & & X\pp W^nX\ar[d]^-{1\pp i_n}\\
X\pp X \ar[rr]_-{1\pp \Delta_n}\ar@{-->}[urr]^-{g} & & X\pp\Pi^nX
}$$ 
\end{theorem}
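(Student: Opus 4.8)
The plan is to establish the classical Whitehead-type equivalence in the fibrewise pointed setting, using the Iwase-Sakai characterization (Theorem \ref{IwaseDef}) as the bridge between covers and compressions. The key observation is that the fibrewise pointed fat wedge $X\pp W^nX$ is assembled from $n$ copies of the ``big cell'' $\{(x,y_1,\ldots,y_n)\mid y_j=x\}$, and a compression of $1\pp\Delta_n$ into $X\pp W^nX$ should encode exactly $n$ fibrewise pointed categorical sets covering $X\times X$. So I would prove the two implications separately, in each case translating between an open cover of $X\times X$ by fibrewise pointed categorical sets and a homotopy lifting the diagonal into the fat wedge.

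\emph{From cover to compression.} Assume $\TC^M(X)\le n$, so by Theorem \ref{IwaseDef} there is an open cover $\{U_1,\ldots,U_n\}$ of $X\times X$ with each $U_i$ containing $\Delta(X)$ and fibrewise-pointed-compressible to the diagonal via a homotopy $H_i$. The idea is to use these compressions to build a map $g\colon X\pp X\to X\pp W^nX$ that is fibrewise-pointed-homotopic to $1\pp\Delta_n$. Concretely, for a point $(x,y)\in X\times X$ the $n$-tuple $(g_1(x,y),\ldots,g_n(x,y))$ should land in the fat wedge because, using a partition of unity subordinate to the cover together with the compressions $H_i$, at least one coordinate can be pushed to equal $x$ (the basepoint of the fibre over $x$). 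The compression homotopies supply precisely the deformation that moves each $y_j$ along the retraction of $U_j$ onto $\Delta$, and the stationarity of each $H_i$ on the diagonal guarantees that the resulting $g$ respects the section, i.e. is fibrewise \emph{pointed}. This is the direction where the categorical neighbourhood hypothesis on $U$ is needed: it ensures one can deform the diagonal into the fat wedge without leaving the region where the local compressions are defined.

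\emph{From compression to cover.} Conversely, suppose $g\colon X\pp X\to X\pp W^nX$ compresses $1\pp\Delta_n$. Pulling back the $n$ canonical open subsets of the fat wedge (the preimages under $1\pp q_n$ of the complements of the $n$ deleted faces, or equivalently the sets where the $j$-th coordinate is ``free'') along $g$ yields open sets $U_1,\ldots,U_n$ covering $X\times X$. On each $U_j$ the homotopy between $1\pp\Delta_n$ and $g$, composed with the projection that collapses the $j$-th coordinate onto the basepoint, furnishes a fibrewise pointed compression of $U_j$ onto the diagonal; Theorem \ref{IwaseDef} then gives $\TC^M(X)\le n$. The simplicial-complex hypothesis on $X\times X$ is what lets us pass from a homotopy-theoretic compression to an honest open cover, since on a simplicial complex one can fatten the closed preimages of the fat-wedge faces to open sets while preserving compressibility.

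\emph{The main obstacle} I expect is not either implication in isolation but the careful bookkeeping of the \emph{pointed} condition throughout, i.e. verifying that every homotopy constructed is stationary on $\Delta(X)$ and fixes the first coordinate, so that it is genuinely a fibrewise pointed homotopy rather than merely a fibrewise one. In particular, in the cover-to-compression direction one must check that the partition-of-unity gluing of the individual compressions $H_i$ does not disturb the basepoints of the fibres, and in the compression-to-cover direction one must ensure the deleted-face projections of the fat wedge are compatible with the sections. The hypothesis that $X\times X$ is a simplicial complex and that a single fibrewise pointed categorical neighbourhood $U$ of $\Delta(X)$ exists should be exactly what is required to control these pointedness issues and to guarantee the cover consists of \emph{open} (not merely neighborhood-deformation-retract) sets.
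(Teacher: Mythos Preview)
Your overall plan matches the paper's proof (which is essentially the James--Morris argument), but you have the roles of the two hypotheses reversed, and this matters for the actual construction.

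In the compression-to-cover direction the paper does \emph{not} rely on the simplicial-complex hypothesis. The difficulty there is that the ``big cells'' $\{(x,y_1,\ldots,y_n)\mid y_j=x\}$ of the fat wedge are \emph{closed}, not open, so their preimages under $g$ do not directly give an open cover. This is exactly where the fibrewise pointed categorical neighbourhood $U$ of $\Delta(X)$ enters: one sets $U_i := g^{-1}\bigl((1\pp\pr_i)^{-1}(U)\bigr)$. These are open (because $U$ is), they cover $X\times X$ (because $g$ lands in the fat wedge, so some coordinate equals $x$, hence lies in $\Delta(X)\subset U$), and the homotopy $(1\pp\pr_i)\circ H_t$ compresses $U_i$ into $U$, after which the assumed categorical compression of $U$ finishes the job. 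Your talk of ``fattening closed preimages to open sets via the simplicial structure'' is not how this step goes; the required fattening is supplied by $U$.

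Conversely, in the cover-to-compression direction the categorical neighbourhood $U$ plays no role. What is needed there is normality of $X\times X$ (guaranteed by the simplicial-complex hypothesis): one shrinks each $V_i$ to an open $W_i$ with $\cl(W_i)\subset V_i$ and chooses Urysohn functions $r_i$ equal to $1$ on $\Delta(X)$ and $0$ off $W_i$. The deformations $d_i(x,y,t)=H_i(x,y,t\cdot r_i(x,y))$ on $V_i$, extended by the identity off $\cl(W_i)$, then assemble into the desired compression of $1\pp\Delta_n$. Your instinct to use a partition of unity is in the right spirit, but note the functions $r_i$ are not a partition of unity (they all equal $1$ on the diagonal), and the pointedness is automatic because each $H_i$ is stationary on $\Delta(X)$ and $r_i\equiv 1$ there. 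So your worry about ``partition-of-unity gluing disturbing basepoints'' dissolves once the correct functions are used.
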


\proof
This theorem is a special case of Propositions 6.1 and 6.2 of \cite{JamesMorris}. We summarize the proof given 
there by James and Morris with appropriate adjustments for this particular setting.

First, let $H_t\colon X\pp X\rightarrow X\pp\Pi^n X$ be the fibrewise pointed deformation of $1\pp\Delta_n$ into a 
map $g\colon X\pp X\rightarrow X\pp W^nX$. Let $U$ be the open fibrewise pointed categorical neighbourhood of 
$\Delta(X)$. Define $U_i=g^{-1}(1\pp\pr_i)^{-1}(U)$ where $1\pp\pr_i\colon X\pp\Pi^nX\rightarrow X\pp X$ is 
$i$th projection on the second factor. Then $$(1\pp\pr_i)H_t\colon X\pp X\rightarrow X\pp X$$ compresses $U_i$ 
to $U$ via a fibrewise pointed homotopy. So, $\{U_1,\ldots,U_n\}$ is a fibrewise pointed open categorical cover of 
$X\pp X$ and $\TC^M(X)\leq n$.

Conversely, assume that $\TC^M(X)\leq n$ and let $\{V_1,\ldots,V_n\}$ be the fibrewise pointed open categorical cover 
of $X\pp X$. Let $H_i\colon V_i\times I\rightarrow X\times X$ be the fibrewise pointed nullhomotopies of the inclusions 
$j_i\colon V_i\rightarrow X\pp X$, $i=1,\ldots,n$. The product $X\pp X$ is normal and $\Delta(X)$ is closed in 
$X\pp X$, so there exist open neighbourhoods $W_i$ of $\Delta(X)$ such that $\cl(W)_i\subset V_i$, as well as 
maps $r_i\colon X\pp X\rightarrow I$ with $r_i|_{\Delta(X)}=1$ and $r_i|_{(X\pp X)\setminus W_i}=0$. A fibrewise 
pointed deformation $d_i\colon (X\pp X)\times I\rightarrow X\pp X$ of the identity is given by
$$d_i(x,y,t)=\left\{\begin{array}{ll}(x,y); & (x,y)\in X\pp X\setminus\cl(W)_i,\\H_i(x,y,t\cdot r_i(x,y)); & (x,y)\in V_i.
\end{array}\right.$$
Let $d\colon (X\pp X)\times I\rightarrow X\pp\Pi^nX$ be the fibrewise pointed homotopy defined by the 
relations $(1\pp\pr_i)d=d_i$ for $i=1,\ldots,n$. Then $d$ compresses $1\pp\Delta_n$ into $X\pp W^nX$.
\endproof

The Ganea-type characterization of the LS-category is based on the homotopy pull-back of the diagram used for the 
Whitehead-type characterization. In fact, if we define
$$\overline G_n(X)=\{\alpha\in(\Pi^nX)^I\;|\;\alpha(0)\in W^nX,\alpha(1)\in\Delta_n(X)\}$$ 
then the diagram  
$$\xymatrix{
\overline G_n(X)  \ar[rr] \ar[d] & & W^nX\ar[d]^-{i_n}\\
X \ar[rr]_-{\Delta_n} & & \Pi^nX
}$$ 
(with the obvious projections from $\overline{G}_n(X)$) is the standard homotopy pull-back. The space 
$\overline G_n(X)$ is known to be homotopy equivalent to the previously defined Ganea space $G_n(X)$ (cf. 
\cite[Theorem 1.63]{CLOT}). It is therefore reasonable to consider the fibrewise homotopy pull-back of the diagram 
used in the Whitehead-type characterization for an alternative characterization of the monoidal topological complexity.

We begin by replacing the inclusion $1\pp i_n\colon X\pp W^nX\rightarrow X\pp\Pi^nX$ by the fibration
$$1\pp\overline{\iota}_n\colon X\pp (W^nX \sqcap (\Pi^nX)^I)\rightarrow X\pp\Pi^n X,$$
$$(1\pp\overline{\iota}_n)(x,y_1,\ldots,y_n,\alpha)=(x,\alpha(1)),$$
where the domain space $E=X\pp (W^nX \sqcap (\Pi^nX)^I) $ is given by
$$E=\{(x,y_1,\ldots,y_n,\alpha)\in X\times W^nX\times(\Pi^nX)^I\;|\;\exists j: y_j=x,\alpha(0)=(y_1,\ldots,y_n)\}.$$
The fibrewise pull-back along $1\pp\Delta_n$ gives us the space
$$\overline E=\{(x',y',x,y_1,\ldots,y_n,\alpha)\in X\times X\times E\;|\;(y,y,\ldots,y)=\alpha(1),x'=x\},$$
that can be also written in a more compact form as
$$\overline E=\{(x,\alpha)\in X\pp(\Pi^nX)^I\;|\;(x,\alpha(0))\in X\pp W^nX,\alpha(1)\in\Delta_n(X)\}.$$
Let $p\colon \overline E\to X$ be the projection to the first component, and let the section $s\colon X\to\overline E$ 
be given by $x\mapsto (x,c_x,\ldots,c_x)$. Since the fibre of $p$ over any $x\in X$ is precisely the space 
$\overline{G}_n(X)$ based at the point $(c_x,\ldots,c_x)=\Delta_n(c_x)$, we denote the fibrewise pointed space 
$X\stackrel{s}{\rightarrow} \overline E\stackrel{p}{\rightarrow} X$ by $X\pp \overline{G}_n(X)$. The fibrewise pointed 
spaces $X\pp G_n(X)$ and $X\pp \overline G_n(X)$ are related by a fibrewise map which is a homotopy equivalence when 
restricted to each fibre. By Theorem 3.6 of \cite{Dold} they are fibre-homotopy equivalent. Thus we may conclude that 
the diagram
\begin{equation}\label{pullbackdgm}\xymatrix{
X\pp G_nX \ar[d]_-{1\pp p_n}\ar[rr] & & X\pp W^nX\ar[d]^-{1\pp i_n}\\
X\pp X\ar[rr]_-{1\pp\Delta_n} & & X\pp\Pi^nX
}
\end{equation}
is a fibrewise pointed homotopy pull-back. Since the liftings of $1\pp \Delta_n$ correspond to sections of $1\pp p_n$ 
we have the following 

\begin{coro}\label{Ganea}
$\TC^M(X)$ is the least integer $n$, such that the map
$$1\pp p_n\colon X\pp G_n(X)\rightarrow X\pp X$$
admits a section.
\end{coro}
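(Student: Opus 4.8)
The plan is to deduce Corollary \ref{Ganea} directly from the Whitehead characterization (Theorem \ref{White}) by exploiting the pull-back property of diagram \eqref{pullbackdgm}. The key observation, already signalled in the sentence preceding the statement, is the standard correspondence between liftings of the bottom map of a pull-back square and sections of the opposite edge. So the whole proof amounts to translating the condition ``$1\pp\Delta_n$ compresses into $X\pp W^nX$'' into the condition ``$1\pp p_n$ admits a section.''

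**First I would** recall that by Theorem \ref{White}, under the standing NDR/simplicial hypotheses, $\TC^M(X)\le n$ holds precisely when the fibrewise pointed map $1\pp\Delta_n\colon X\pp X\to X\pp\Pi^nX$ can be fibrewise-pointed compressed into $X\pp W^nX$, i.e.\ when there is a fibrewise pointed map $g\colon X\pp X\to X\pp W^nX$ with $(1\pp i_n)\circ g$ fibrewise pointed homotopic to $1\pp\Delta_n$. **Next I would** invoke the fact that $1\pp\overline\iota_n$ is the fibration replacement of the inclusion $1\pp i_n$, so that such a compression is equivalent to an honest fibrewise pointed \emph{lifting} of $1\pp\Delta_n$ through $1\pp\overline\iota_n$; this is where replacing the inclusion by a fibration pays off, since lifting up to homotopy against a cofibration becomes lifting on the nose against the associated fibration. **Then I would** use the universal property of the fibrewise pointed homotopy pull-back in \eqref{pullbackdgm}: a lifting of the bottom edge $1\pp\Delta_n$ over the right edge $1\pp i_n$ is the same datum as a section of the left edge $1\pp p_n\colon X\pp G_n(X)\to X\pp X$. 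Concretely, a section $\sigma$ sends $(x,y)\in X\pp X$ to a point $(x,\alpha)\in X\pp\overline G_n(X)$, whose component $\alpha(0)\in W^nX$ furnishes the required compression $g$, and conversely a compression together with the connecting path data assembles into a section.

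**The main obstacle** is bookkeeping rather than conceptual: one must check that the equivalences above respect the \emph{fibrewise pointed} structure throughout, i.e.\ that sections are required to restrict correctly over the section $s\colon X\to\overline E$ and that the homotopies are stationary on $\Delta(X)$ in the sense fixed earlier in the excerpt. In particular I would verify that the section $s\colon x\mapsto(x,c_x,\dots,c_x)$ of $X\pp\overline G_n(X)$ is compatible with the zero-dimensional (trivial) lifting coming from the diagonal, so that the correspondence preserves base-points fibrewise. Having established the lifting-section dictionary and the identification $X\pp G_n(X)\simeq X\pp\overline G_n(X)$ (the Dold fibre-homotopy equivalence cited just before the statement), the equivalence of ``least $n$ with a compression'' and ``least $n$ with a section'' is immediate, and the corollary follows.
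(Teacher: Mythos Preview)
Your proposal is correct and follows essentially the same route as the paper. The paper's argument is compressed into the single sentence preceding the statement, ``Since the liftings of $1\pp \Delta_n$ correspond to sections of $1\pp p_n$,'' which is exactly the pull-back correspondence you spell out; you have simply made explicit the steps (fibration replacement, universal property, compatibility with the fibrewise pointed structure, and the Dold identification $X\pp G_n(X)\simeq X\pp\overline G_n(X)$) that the paper leaves to the reader.
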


\begin{remark}
Observe that we have very nearly obtained Farber's original definition of the topological complexity. Indeed, Farber 
defined $\TC(X)$ to be the Schwarz genus of the path-fibration $\ev_{0,1}\colon X^I\to X\times X$, and Schwarz proved 
in \cite{Schwarz} that the genus of a fibration $p\colon E\to B$ is the minimal $n$ such that the $n$-fold fibrewise 
join $p_n\colon E\ast_B\ldots\ast_B E\to B$ admits a section. The space we obtain starting with $\ev_{0,1}$ coincides 
with the fibrewise Ganea space $X\pp G_n(X)$ we have just defined if we consider them both as fibrewise spaces over 
$X$. The difference is, our Whitehead and Ganea definitions are based on fibrewise pointed spaces and we need all 
homotopies to preserve the sections, resulting in the more restrictive $\TC^M$ rather than $\TC$.
\end{remark}

These fibrewise Ganea- and Whitehead-type characterizations of monoidal topological complexity 
(Theorem \ref{White} and Corollary \ref{Ganea}) are closely related to the Ganea and Whitehead 
characterizations of LS-category. Indeed, if we choose any $x\in X$ and restrict all spaces and maps in the diagram above 
to the fibre over $x$, we retrieve the homotopy pull-back diagram for the LS-category. This leads to the following 
diagram of fibrewise pointed spaces over $X$
\begin{equation}\label{kocka}\xymatrixcolsep{1cm}\xymatrixrowsep{1.2cm}
\xymatrix@!0{
& & G_n(X)\ar[rrrr]\ar'[d][dd]\ar[dll] & & & & W^nX\ar[dd]\ar[dll]\\
X\ar[rrrr]\ar[dd] & & & & \Pi^nX\ar[dd]\\
& & X\pp G_n(X)\ar'[rr][rrrr]\ar'[d][dd]\ar[dll] & & & & X\pp W^nX\ar[dd]\ar[dll]\\
X\pp X\ar[rrrr]\ar[dd] & & & & X\pp\Pi^nX\ar[dd]\\
& & X\ar@{=}'[rr][rrrr] & & & & X\\
X\ar@{=}[rrrr]\ar@{=}[urr] & & & & X\ar@{=}[urr]
}
\end{equation}
that allows us to easily compare any invariants that are connected to the existence of sections in this diagram. 
For example, we see that $\cat(X)\leq\TC^M(X)$ since $p_n$ obviously admits a section if $1\pp p_n$ admits a section.


\section{Lower bounds for topological complexity}\label{sec:bounds}

We have already mentioned the most widely used lower bound for the topological complexity, namely the zero-divisors 
cup length. Although the original definition used cohomology with field coefficients this restriction is not necessary. 
Indeed, let $R$ be any ring and let $\nil_R(X)$ be the least $n$ such that all cup-products of length $n$ in the 
ring $H^*(X\times X,\Delta(X);R)$ are trivial. We obtain the following estimate by following the reasoning in 
\cite[\S 3]{JamesMorris}. Note that James and Morris have stated their result for fibrewise pointed LS-category, so 
it clearly holds for $\TC^M$, but it turns out their idea also works for what Iwase and Sakai call fibrewise unpointed 
LS-category, or, in particular, for $\TC$:

\begin{prop}
Let $R$ be a ring and let $X$ be a locally finite simplicial complex. Then $\TC(X)\geq\nil_R(X)$.
\end{prop}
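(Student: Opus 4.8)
The plan is to establish the lower bound $\TC(X) \geq \nil_R(X)$ by deriving a contradiction from a categorical cover that is too small. Suppose $\TC(X) = n$, so by Definition \ref{FarberDef} there is an open cover $\{U_1, \ldots, U_n\}$ of $X \times X$ together with sections $s_i$ of the path-fibration $\ev_{0,1}$. The key geometric fact is that each $U_i$ admits a section of the path-fibration, which means the restriction of $\ev_{0,1}$ to $U_i$ has a section; equivalently, each $U_i$ deformation-retracts (fibrewise) into the diagonal $\Delta(X)$, so the inclusion $U_i \hookrightarrow X \times X$ factors up to homotopy through $\Delta(X)$.

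First I would pass to cohomology relative to the diagonal. The cohomological consequence of the previous paragraph is that the restriction map $H^*(X \times X, \Delta(X); R) \to H^*(U_i, U_i \cap \Delta(X); R)$ is the zero map on any class, because a class in the relative cohomology $H^*(X\times X,\Delta(X);R)$ pulls back trivially over a set that retracts onto the diagonal. More precisely, the inclusion $(U_i, U_i \cap \Delta(X)) \hookrightarrow (X \times X, \Delta(X))$ kills relative cohomology classes, so each element $u \in H^*(X\times X, \Delta(X); R)$ restricts to zero in $H^*(U_i, U_i \cap \Delta(X); R)$.

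Next I would invoke the standard cup-length argument. Take any product $u_1 \smile \cdots \smile u_n$ of $n$ classes in the relative ring $H^*(X \times X, \Delta(X); R)$. By the observation above, each $u_j$ is the image of a class in $H^*(X \times X, U_j; R)$ (lifting the relative-to-diagonal class to relative-to-$U_j$, using that $U_j$ is categorical); the cup product of these lifts lives in $H^*\bigl(X \times X, \bigcup_{j=1}^n U_j; R\bigr) = H^*(X \times X, X \times X; R) = 0$, since the $U_j$ cover $X \times X$. Chasing this through the long exact sequences, the product $u_1 \smile \cdots \smile u_n$ must already vanish in $H^*(X \times X, \Delta(X); R)$. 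Hence every cup-product of length $n$ in $H^*(X \times X, \Delta(X); R)$ is trivial, which by definition means $\nil_R(X) \leq n = \TC(X)$.

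The main obstacle I anticipate is justifying the lifting of a diagonal-relative class to a $U_j$-relative class. The naive statement is that a categorical set admits the stronger relative structure, but $U_j$ need not contain $\Delta(X)$ in the unpointed ($\TC$) setting, so $H^*(X \times X, U_j; R)$ and $H^*(X\times X, \Delta(X); R)$ are genuinely different groups. The resolution is that each $U_j$ is compressible to the diagonal via a fibrewise homotopy (Theorem \ref{IwaseDef}), so $U_j$ is homotopy equivalent, rel the relevant structure, to a neighbourhood of the diagonal; the inclusion $\Delta(X) \hookrightarrow U_j$ is a homotopy equivalence in the fibrewise sense, which forces the restriction $H^*(X\times X, \Delta(X);R) \to H^*(U_j, \Delta(X) \cap U_j; R)$ to factor correctly and lets me transfer classes. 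Making this identification precise — and verifying it works for the unpointed case, not just the pointed $\TC^M$ for which James and Morris argued in \cite[\S 3]{JamesMorris} — is the delicate point, and I would handle it by carefully following their argument and checking that nowhere is the base-point-preserving (pointed) hypothesis actually used in the cup-length bookkeeping.
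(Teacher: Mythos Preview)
Your proposal is correct and takes essentially the same approach as the paper: both use a categorical open cover, observe that each $U_i$ compresses to $\Delta(X)$ so that classes in $H^*(X\times X,\Delta(X);R)$ lift to $H^*(X\times X,U_i;R)$ via the long exact sequence of the triple, and then kill the $n$-fold cup product because it lands in $H^*(X\times X,\bigcup_i U_i;R)=0$. The paper disposes of your anticipated obstacle exactly as you suggest, by remarking that the compressing homotopy need not be fibrewise pointed for the restriction $H^*(X\times X,\Delta(X);R)\to H^*(U_i,\Delta(X);R)$ to vanish.
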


\proof
Note that $\TC(X)$ is the fibrewise (unpointed) LS-category of the fibrewise pointed space 
$X\stackrel{\Delta}{\longrightarrow}X\pp X\stackrel{\pr_1}{\longrightarrow}X$. If the subset $U$ is fibrewise categorical 
in $X\pp X$, then the induced map
$$H^*(X\pp X,\Delta(X))\longrightarrow H^*(U,\Delta(X))$$
is trivial. This is true because $U$ can be compressed into $\Delta(X)$. Note that it does not matter if this homotopy 
is fibrewise pointed, just fibrewise or neither of those. It then follows from the cohomology exact sequence
$$\ldots\rightarrow H^*(X\pp X,U)\rightarrow H^*(X\pp X,\Delta(X))\stackrel{0}{\rightarrow} H^*(U,\Delta(X)) \rightarrow H^*(X\pp X,U) \rightarrow\ldots$$
of the triple $(X\pp X,U,\Delta(X))$ that the map
$$H^*(X\pp X,U)\longrightarrow H^*(X\pp X,\Delta(X))$$
is surjective.
If $\{U_1,\ldots,U_n\}$ is a fibrewise pointed categorical open covering of $X\pp X$, then for every 
$\alpha_i\in H^*(X\pp X,\Delta(X))$ there exists a $\beta_i\in H^*(X\pp X,U_i)$, $i=1,\ldots,n$. The product
$$\beta_1\smile\ldots\smile\beta_n\in H^*(X\pp X,U_1\cup\ldots\cup U_n)$$
is zero since the group itself is trivial. So, the product
$$\alpha_1\smile\ldots\smile\alpha_n\in H^*(X\pp X,\Delta(X))$$
is zero. This shows that $\TC(X)\geq\nil_R(X)$.
\endproof

\begin{remark}
Under the same assumptions we also have $\TC^M(X)>\nil_R(X)$. We can use the same argument as above or simply apply 
Theorem \ref{bounds}.
\end{remark}

\begin{remark}
If we specialize to coefficients in a field $k$ then $\nil_k(X)=\zcl_k(X)+1$. More precisely, if $k$ is a field and 
$H^*(X;k)$ is of finite type, then the external cross product
$$\times\colon H^*(X;k)\otimes_k H^*(X;k)\rightarrow H^*(X\times X;k)$$
is an isomorphism \cite[\S VII.7]{Dold:book}. Also, $x_1\smile x_2=\Delta^*(x_1\times x_2)$ \cite[\S VII.8]{Dold:book}. 
Finally, the cohomology exact sequence 
of the pair $(X\times X,\Delta(X))$ splits into short exact sequences of the form 
$$0\rightarrow H^*(X\times X,\Delta(X);k) \rightarrow H^*(X\times X;k) \stackrel{i^*}{\rightarrow} H^*(\Delta(X);k) \rightarrow 0.$$
From this we obtain the following commutative diagram:
$$\xymatrix{
0\ar[r] &  H^*(X\times X, \Delta(X);k)\ar[r] & H^*(X\times X;k)\ar[r]^-{i^*}\ar@{<-}[d]^-{\times}_-{\cong}\ar[dr]^-{\Delta^*} & H^*(\Delta(X);k)\ar@{=}[d]\\
 & & H^*(X;k)\otimes H^*(X;k)\ar[r]_-{\smile}&  H^*(X;k)
}$$
Under this identification we have $\ker\smile = \ker i^* = H^*(X\times X,\Delta(X);k)$.
\end{remark}

In the theory of LS-category there are several invariants that are better estimates of $\cat(X)$ than the cup-length. 
A systematic exposition of lower bounds for the LS-category in \cite[Chapter 2]{CLOT} is based on the diagram 
relating Whitehead and Ganea characterizations of the LS-category. Based on the results from the previous 
section we are now able to generalize this approach and obtain estimates of the monoidal topological complexity that are 
better than the zero-divisors cup length. We first complete the pull-back diagram (\ref{pullbackdgm}) by adding 
the cofibres of the vertical maps. We have already considered the cofibre of the inclusion 
$1\pp i_n\colon X\pp W^nX\rightarrow X\pp\Pi^nX$, it is the fibrewise pointed space $X\pp \wedge^nX$. We may likewise 
construct the fibrewise pointed space $X\pp G_{[n]}(X)$ as the cofibre of the map 
$1\pp p_n\colon X\pp G_n(X)\rightarrow X\pp X$. The quotient map is denoted 
$1\pp q'_n\colon X\pp X\rightarrow X\pp G_{[n]}(X)$. We obtain the following diagram of fibrewise pointed spaces 
over $X$:
$$\xymatrix{
X\pp G_nX \ar[d]_-{1\pp p_n}\ar[rr]^{1\pp\widehat{\Delta}_n}& & X\pp W^nX\ar[d]^-{1\pp i_n}\\
X\pp X \ar[d]_-{1\pp q'_n}\ar[rr]^-{1\pp\Delta_n}& & X\pp\Pi^nX\ar[d]^-{1\pp q_n}\\
X\pp G_{[n]}X \ar[rr]_{1\pp\tilde{\Delta}_n}& & X\pp\wedge^nX
}$$
Based on the interrelations in the diagram we define several lower bounds for the monoidal topological complexity. All of 
them have analogues among lower bounds for the LS-category and that motivates both the notation and their names.

\begin{definition}
\label{5def}
\begin{enumerate}
\item Let $R$ be a ring. The \emph{$\TC$-Toomer invariant of $X$ with coefficients in $R$}, denoted $e^{\TC}_R(X)$, 
is the least integer $n$ such that the induced map 
$$H_*(1\pp p_n)\colon H_*(X\pp G_n(X),\widehat{G}_n(X);R)\rightarrow H_*(X\pp X,\Delta(X);R)$$
is surjective.
\item The \emph{weak topological complexity}, $\wTC(X)$, is the least integer $n$, such that the composition 
$(1\pp q_n)(1\pp\Delta_n)$ is fibrewise pointed nullhomotopic.
\item The \emph{$\TC$-conilpotency of $X$}, $\cTC(X)$, is the least integer $n$, such that the composition 
$(1\pp\Sigma q_n)(1\pp\Sigma\Delta_n)$ is fibrewise pointed nullhomotopic.
\item For any integer $j\geq 0$ let $\sigma^j\TC(X)$ be the least integer $n$ for which there exists a fibrewise map 
$s\colon X\pp\Sigma^jX\rightarrow X\pp\Sigma^jG_n(X)$, such that the composition $(1\pp\Sigma^jp_n)s$ is fibrewise 
pointed homotopic to the identity. In particular, $\sigma^0\TC(X)=\TC(X)$. The \emph{$\sigma$-topological complexity of 
$X$} (or \emph{stable topological complexity of $X$}) is $$\sigma\TC(X)=\inf_{j\in\mathbb{N}}\sigma^j\TC(X).$$
\item The \emph{weak topological complexity of Ganea}, $\wTC_G(X)$, is the least integer $n$, such that $1\pp q_n'$ is 
fibrewise pointed nullhomotopic.
\end{enumerate}
\end{definition}

Table \ref{compare} relates these new lower bounds to the corresponding notions for the LS-category.
\begin{table}[ht]
\begin{center}\begin{tabular}{lr}
{LS-cat} & {$\TC^M$}\\\hline\\
$e_R(X)$ (Toomer invariant) & $e^{\TC}_R(X)$\\
$\wcat(X)$ (weak category) & $\wTC(X)$\\
$\conil(X)$ (conilpotency) & $\cTC(X)$\\
$\sigma\cat(X)$ ($\sigma$-category) & $\sigma\TC(X)$\\
$\wcat_G(X)$ (weak category of Ganea) & $\wTC_G(X)$\\[-2mm]
 & \\\hline\\
\end{tabular}\end{center}
\caption{Lower bounds for LS-category and their $\TC^M$ counterparts.}
\label{compare}
\end{table}

\begin{remark}
A fibrewise pointed map $f\colon E_1\rightarrow E_2$ between the fibrewise pointed spaces 
$B\stackrel{s_i}{\longrightarrow} E_i\stackrel{p_i}{\longrightarrow}B$ is \emph{fibrewise pointed nullhomotopic} 
if it is fibrewise pointed homotopic to $s_2\circ p_1$. To avoid an excessive use of indexing we write simply 
$f\simeq *$ and just keep in mind that we are working in the category of the fibrewise pointed spaces.
\end{remark}

Let $X\stackrel{s}{\rightarrow} X\pp Y\stackrel{p}{\rightarrow} X$ be any of the fibrewise pointed constructions 
described in Section \ref{sec2}. We will denote by $X\pp \Sigma Y$ the fibrewise pointed suspension of $X\pp Y$. 
To compare the lower bounds we just introduced we need the following result:

\begin{prop}\label{MVlemma}
For the homology and cohomology with any coefficients we have the following isomorphisms
$$H_n(X\pp Y,s(X))\iso H_{n+j}(X\pp\Sigma^j Y,(\Sigma^j s)(X))$$
and $$H^n(X\pp Y,s(X))\iso H^{n+j}(X\pp\Sigma^j  Y,(\Sigma^j s)(X)).$$
Moreover, if $f\colon X\pp Y\rightarrow X\pp Z$ is a fibrewise pointed map then there is a commutative diagram
$$\xymatrix{
H_n(X\pp Y,s(X)) \ar[rr]^{H_n(f)} \ar[d]_\cong & & H_n(X\pp Z,s(X)) \ar[d]_\cong\\
{H_{n+j}(X\pp\Sigma^j Y,(\Sigma^j s)(X))} \ar[rr]^{H_{n+j}(\Sigma^jf)}& & H_{n+j}(X\pp\Sigma^j Z,(\Sigma^j s)(X))
}$$
and a similar result holds for cohomology.
\end{prop}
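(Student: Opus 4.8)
The plan is to reduce to the case $j=1$, since the general statement follows by iterating the one-fold suspension isomorphism $j$ times, and to prove the one-fold isomorphism by a relative Mayer--Vietoris argument applied to the standard decomposition of a reduced suspension into two cones, carried out fibrewise over $X$. Thus I would first fix the explicit model of the fibrewise reduced suspension and then exhibit $X\pp\Sigma Y$ as a union of two fibrewise cones whose intersection is a copy of $X\pp Y$.

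Concretely, writing $E=X\pp Y$ with section $s$, the fibrewise reduced suspension $X\pp\Sigma Y$ is the fibrewise quotient of $E\times I$ obtained by collapsing, over each $x\in X$, the top $E_x\times\{1\}$, the bottom $E_x\times\{0\}$ and the segment $\{s(x)\}\times I$ to a single point, which is the value of the new section $(\Sigma s)(x)$. Splitting $I=[0,\tfrac12]\cup[\tfrac12,1]$ produces a lower cone $\mathcal C_-$ and an upper cone $\mathcal C_+$ with $X\pp\Sigma Y=\mathcal C_-\cup\mathcal C_+$. To make the triad excisive I would thicken these to the open sets $\mathcal C_-^{\circ}$ and $\mathcal C_+^{\circ}$ coming from $[0,\tfrac12+\varepsilon)$ and $(\tfrac12-\varepsilon,1]$; these cover $X\pp\Sigma Y$, each deformation retracts fibrewise onto the corresponding cone, and their intersection deformation retracts fibrewise onto the equatorial slice $E\times\{\tfrac12\}$. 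Since the basepoint segment is collapsed, this slice is canonically the fibrewise pointed space $X\pp Y$, and under this identification the restriction of $(\Sigma s)(X)$ is exactly $s(X)$.

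Next I would feed this into the relative Mayer--Vietoris sequence of the triad, taken relative to $(\Sigma s)(X)$, which lies in both cones as their common collapsed cone point. Each cone $\mathcal C_\pm$ is fibrewise contractible onto $(\Sigma s)(X)$, so $H_*(\mathcal C_\pm^{\circ},(\Sigma s)(X))=0$ for all $*$; the connecting homomorphism of the sequence is therefore an isomorphism
$$H_{n+1}(X\pp\Sigma Y,(\Sigma s)(X))\;\xrightarrow{\ \cong\ }\;H_{n}(\mathcal C_-^{\circ}\cap\mathcal C_+^{\circ},(\Sigma s)(X))\;\cong\;H_{n}(X\pp Y,s(X)),$$
which is the desired statement for $j=1$. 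The identical argument with the cohomology Mayer--Vietoris sequence gives the cohomological isomorphism.

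For naturality, a fibrewise pointed map $f\colon X\pp Y\to X\pp Z$ induces $\Sigma f\colon X\pp\Sigma Y\to X\pp\Sigma Z$ that carries $\mathcal C_\pm(Y)$ to $\mathcal C_\pm(Z)$ and the equatorial slice to the equatorial slice, hence commutes with the entire two-cone decomposition. The relative Mayer--Vietoris sequences are natural with respect to such morphisms of excisive triads, so the connecting isomorphisms intertwine $H_n(f)$ with $H_{n+1}(\Sigma f)$, yielding the asserted commutative square; iterating it $j$ times gives the square for $\Sigma^j$. The main obstacle I anticipate is purely point-set: checking that the fibrewise cones are genuinely fibrewise contractible onto the collapsed section and that the thickened cover is excisive in the fibrewise quotient topology. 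Once the explicit suspension model is in hand these are routine, the essential input being that in each fibre the construction reduces to the ordinary reduced suspension with the section as its basepoint.
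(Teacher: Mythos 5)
Your proof is correct and follows essentially the same route as the paper: decompose the fibrewise suspension into two fibrewise pointed cones, observe that each is fibrewise contractible so its relative homology vanishes, and read off the isomorphism from the connecting map of the relative Mayer--Vietoris sequence, iterating for general $j$. The extra care you take with the excisive thickening and with naturality of the connecting homomorphism is sound and only elaborates what the paper leaves implicit.
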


\proof
Recall that $(X\pp\Sigma Y,(\Sigma s)(X))$ is obtained by gluing of the fibrewise pointed cones 
$(X\pp C^{+}Y,(C^{+}s)(X))$ and $(X\pp C^{-}Y,(C^{-}s)(X))$. The two cones are fibrewise pointed contractible, 
so $H_n(X\pp C^{+}Y,(C^{+}s)(X))\iso H_n(X\pp C^{-}Y,(C^{-}s)(X))=0$. The intersection of the two cones is 
$(X\pp Y,s(X))$, so from the relative version of the Mayer-Vietoris sequence for homology we can conclude that
$$H_n(X\pp Y,s(X))\iso H_{n+1}(X\pp\Sigma Y,(\Sigma s)(X)).$$
The analogous argument for cohomology deals with the second statement.
\endproof

As a preparation for the next theorem we list the relations between the lower bounds that are clear from the 
definition.

\begin{prop}\label{basic}
\begin{enumerate}
\item $\TC^M(X)\geq e^{\TC}_R(X)$,
\item $\TC^M(X)\geq\wTC(X)\geq\cTC(X)$,
\item $\TC^M(X)\geq\wTC_G(X)\geq\sigma^i\TC(X)\geq\sigma^j\TC(X)\geq\sigma\TC(X)$ for $1\leq i\leq j$,
\item $\wTC_G(X)\geq\wTC(X)$.
\end{enumerate}
\end{prop}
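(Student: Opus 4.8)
The plan is to read off every inequality from the definitions by the elementary principle that if the defining condition at level $n$ for one invariant implies the defining condition at level $n$ for another, then the latter invariant is no larger. The two structural inputs are Corollary \ref{Ganea}, which says $\TC^M(X)$ is the least $n$ for which $1\pp p_n$ admits a fibrewise pointed section, and the commuting square relating $1\pp q'_n$, $1\pp\Delta_n$, $1\pp q_n$ and $1\pp\tilde{\Delta}_n$. Throughout one must check that all homotopies, cones and suspensions respect the section over $X$, i.e. that everything stays in the fibrewise pointed category.

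First I would dispatch the inequalities bounded by $\TC^M$. A section $s$ of $1\pp p_n$ is fibrewise pointed, hence induces a right inverse $H_*(s)$ of $H_*(1\pp p_n)$ on relative homology, which is therefore surjective; this gives $\TC^M(X)\ge e^{\TC}_R(X)$. For $\TC^M(X)\ge\wTC(X)$, a section of $1\pp p_n$ is equivalent, via the homotopy pull-back (\ref{pullbackdgm}) and Theorem \ref{White}, to a compression $g$ of $1\pp\Delta_n$ into $X\pp W^nX$; since the fibrewise cofibration composite $q_ni_n$ is the basepoint map, $(1\pp q_n)(1\pp\Delta_n)\simeq(1\pp q_n)(1\pp i_n)g\simeq *$. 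For $\TC^M(X)\ge\wTC_G(X)$ I would use the section directly: realizing $X\pp G_{[n]}(X)$ as the fibrewise mapping cone of $1\pp p_n$, the cone-coordinate formula $[s(\cdot),t]$ is an explicit fibrewise pointed nullhomotopy of the inclusion $1\pp q'_n$. The remaining formal steps are: $\wTC(X)\ge\cTC(X)$ and $\sigma^i\TC(X)\ge\sigma^j\TC(X)$ for $i\le j$, both obtained by applying the fibrewise suspension functor to a nullhomotopy, respectively a section, and invoking its functoriality (so that $(1\pp\Sigma q_n)(1\pp\Sigma\Delta_n)=\Sigma[(1\pp q_n)(1\pp\Delta_n)]$, etc.); $\sigma^j\TC(X)\ge\sigma\TC(X)$, immediate from $\sigma\TC=\inf_j\sigma^j\TC$; and $\wTC_G(X)\ge\wTC(X)$, which follows because the square gives $(1\pp q_n)(1\pp\Delta_n)=(1\pp\tilde{\Delta}_n)(1\pp q'_n)$, so a nullhomotopy of $1\pp q'_n$ produces one of $(1\pp q_n)(1\pp\Delta_n)$.

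The one genuinely non-formal step, and the \emph{main obstacle}, is $\wTC_G(X)\ge\sigma^i\TC(X)$ for $i\ge 1$: I must extract a stable fibrewise section of $1\pp p_n$ from the mere nullity of the cofibre inclusion $1\pp q'_n$. The point is that after a single fibrewise suspension the relevant spaces become fibrewise co-$H$ objects, so the fibrewise pointed homotopy sets acquire a group structure and the Barratt--Puppe sequence of the fibrewise cofibre sequence $X\pp\Sigma G_n(X)\to X\pp\Sigma X\to X\pp\Sigma G_{[n]}(X)$ can be exploited. Concretely, $1\pp q'_n\simeq *$ suspends to $1\pp\Sigma q'_n\simeq *$, and the cogroup structure on $X\pp\Sigma X$ converts this nullity into a fibrewise section of $1\pp\Sigma p_n$; this is the fibrewise pointed incarnation of the classical argument behind $\wcat_G\ge\sigma\cat$ in \cite[Chapter 2]{CLOT}. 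Once the section at level $i=1$ is in hand, suspending it $i-1$ further times yields sections at all higher levels, so $\sigma^i\TC(X)\le\wTC_G(X)$ for every $i\ge 1$, completing the chain. Keeping the coaction, the cogroup comultiplication and the splitting homotopies fibrewise pointed, rather than merely fibrewise, is the technical heart of this step; the isomorphisms of Proposition \ref{MVlemma} ensure that suspension behaves well on the relative (co)homology and homotopy data that the argument manipulates.
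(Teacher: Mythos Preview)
Your proposal is correct and matches the paper's proof almost verbatim for parts (1), (2), (4), and for the monotonicity $\sigma^i\TC\ge\sigma^j\TC\ge\sigma\TC$. The one structural difference is in part (3): the paper does not prove $\TC^M\ge\wTC_G$ and $\wTC_G\ge\sigma^i\TC$ separately, but instead establishes the single \emph{equality} $\wTC_G(X)=\sigma^1\TC(X)$ directly from the fibrewise Barratt--Puppe sequence (the map $1\pp q'_n$ is fibrewise pointed null if and only if $1\pp\Sigma p_n$ admits a homotopy section), after which the whole chain in (3) follows by suspending a section of $1\pp p_n$. Your route---an explicit mapping-cone nullhomotopy for $\TC^M\ge\wTC_G$, then a Barratt--Puppe argument for $\wTC_G\ge\sigma^i\TC$---is a valid alternative; the paper's identification $\wTC_G=\sigma^1\TC$ is slightly more economical and also gives the converse direction for free.

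One small imprecision worth flagging: in your Barratt--Puppe step you first suspend to $1\pp\Sigma q'_n\simeq *$ and then argue. The paper applies Barratt--Puppe to the \emph{unsuspended} cofibre sequence $X\pp G_n\to X\pp X\to X\pp G_{[n]}\to X\pp\Sigma G_n\to X\pp\Sigma X$; it is the nullity of $1\pp q'_n$ itself that yields a section of $1\pp\Sigma p_n$. Suspending first and then running the same argument on $\Sigma G_n\to\Sigma X\to\Sigma G_{[n]}$ would a priori only produce a section of $1\pp\Sigma^2 p_n$, i.e.\ $\sigma^2\TC\le\wTC_G$, which is still enough for the stated inequalities but misses the sharper $\sigma^1\TC=\wTC_G$.
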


\proof
\begin{enumerate}
\item If $1\pp p_n\colon X\pp G_n(X)\rightarrow X\pp X$ has a section then the maps
$$H_*(1\pp p_n)\colon H_*(X\pp G_n(X),\widehat{G}_n(X);R)\rightarrow H_*(X\pp X,\Delta(X);R)$$
are surjective, 
so $\TC^M(X)\geq e^{\TC}_R(X)$.
\item If there exists  a map $s\colon X\pp X\rightarrow X\pp W^nX$ such that $(1\pp i_n)s\simeq 1\pp\Delta_n$, 
then
$$(1\pp q_n)(1\pp\Delta_n)\simeq(1\pp q_n)(1\pp i_n)s\simeq *,$$
so $\TC^M(X)\geq\wTC(X)$. If 
$(1\pp q_n)(1\pp\Delta_n)\simeq *$, then $(1\pp\Sigma q_n)(1\pp\Sigma\Delta_n)\simeq *$, so $\wTC(X)\geq\cTC(X)$.
\item Observe that $\wTC_G(X)=\sigma^1\TC(X)$. Indeed, it follows from the fibrewise Barratt-Puppe sequence 
\cite[\S I.2.12]{Crabb-James}
$$X\pp G_n(X)\stackrel{1\pp p_n}{\longrightarrow} X\pp X\stackrel{1\pp q_n'}{\longrightarrow} X\pp G_{[n]}(X)\stackrel{\delta}{\longrightarrow}$$
$$\rightarrow X\pp\Sigma G_n(X)\stackrel{1\pp \Sigma p_n}{\longrightarrow} X\pp\Sigma X\stackrel{1\pp\Sigma q_n'}{\longrightarrow} X\pp\Sigma G_{[n]}(X)\longrightarrow\ldots$$
that the map $1\pp q_n'$ is fibrewise pointed nullhomotopic if and only if the map $1\pp\Sigma p_n$ admits a homotopy 
section. Let $1\leq i\leq j$. If the map
$$1\pp p_n\colon X\pp G_n(X)\rightarrow X\pp X$$
admits a section, then there 
exists a map
$$s\colon X\pp X\rightarrow X\pp G_n(X),$$
such that $(1\pp p_n)s\simeq 1$, and so 
$(1\pp\Sigma^ip_n)(\Sigma^is)\simeq 1$. So, $\TC(X)\geq\sigma^i\TC(X)$.

If $(1\pp\Sigma^ip_n)(\Sigma^is)\simeq 1$, 
then $(1\pp\Sigma^jp_n)(\Sigma^js)\simeq 1$, so $\sigma^i\TC(X)\geq\sigma^j\TC(X)$. It is now obvious that 
$$\TC^M(X)\geq \wTC_G(X)\geq\sigma^i\TC(X)\geq\sigma^j\TC(X).$$
The rightmost inequality follows immediately from the definition of $\sigma\TC(X)$.
\item If $(1\pp q_n')\simeq *$, then
$$(1\pp q_n)(1\pp\Delta_n)\simeq(1\pp\tilde{\Delta}_n)(1\pp q_n')\simeq *,$$
so $\wTC_G(X)\geq\wTC(X)$.
\end{enumerate}
\endproof

The following theorem summarizes all known relations between the various lower bounds considered in this paper. 

\begin{theorem}\label{comp}
Let $R$ be a ring. Then
$$\nil_R(X)\leq\sigma\TC(X)\leq\wTC_G(X)\leq\TC^M(X)\;{\rm and}$$
$$\nil_R(X)\leq\cTC(X)\leq\wTC(X)\leq\wTC_G(X)\leq\TC^M(X).$$
If $k$ is a field, then
$$\nil_k(X)\leq e_k^{\TC}(X)\leq\sigma\TC(X).$$
\end{theorem}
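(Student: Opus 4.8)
The plan is to reduce every one of the new estimates to a single cohomological statement and then feed it through the suspension isomorphism of Proposition \ref{MVlemma}. Proposition \ref{basic} already supplies the right-hand portions $\sigma\TC(X)\le\wTC_G(X)\le\TC^M(X)$ and $\cTC(X)\le\wTC(X)\le\wTC_G(X)\le\TC^M(X)$ of the two chains, so what remains is to bound these invariants below by $\nil_R$ and, over a field, to insert $e^{\TC}_k$ between $\nil_k$ and $\sigma\TC$. The engine for all of this is the fibrewise pointed analogue of the classical description of the reduced diagonal: writing $\bar H^*$ for cohomology relative to the section (so that in particular $\bar H^*(X\pp X)=H^*(X\times X,\Delta(X);R)$), the composite
$$\bar H^*(X\pp X)^{\otimes n}\longrightarrow \bar H^*(X\pp\wedge^nX)\xrightarrow{\;((1\pp q_n)(1\pp\Delta_n))^*\;}\bar H^*(X\pp X),$$
where the first arrow is the fibrewise cross product, sends $u_1\otimes\cdots\otimes u_n$ to the cup product $u_1\smile\cdots\smile u_n$. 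Consequently, as soon as $((1\pp q_n)(1\pp\Delta_n))^*=0$ every $n$-fold product of zero-divisors vanishes, i.e. $\nil_R(X)\le n$.

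Granting this, I would obtain $\nil_R(X)\le\cTC(X)$ at once: if $\cTC(X)\le n$ then $(1\pp\Sigma q_n)(1\pp\Sigma\Delta_n)\simeq *$, and Proposition \ref{MVlemma} identifies the map it induces on relative cohomology with the one induced by the unsuspended composite $(1\pp q_n)(1\pp\Delta_n)$; the latter is therefore zero, and the displayed identity gives $\nil_R(X)\le n$.

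For $\nil_R(X)\le\sigma\TC(X)$ the idea is to route the vanishing through the factorisation $(1\pp q_n)(1\pp\Delta_n)=(1\pp\tilde\Delta_n)(1\pp q'_n)$ coming from the lower square of the diagram that completes (\ref{pullbackdgm}) with the cofibres, which on cohomology reads $((1\pp q_n)(1\pp\Delta_n))^*=(1\pp q'_n)^*(1\pp\tilde\Delta_n)^*$. If $\sigma^j\TC(X)\le n$, the fibrewise homotopy section of $1\pp\Sigma^jp_n$ makes $(1\pp\Sigma^jp_n)^*$ a split monomorphism on relative cohomology, so the cohomology sequence of the fibrewise cofibration $X\pp\Sigma^jG_n(X)\to X\pp\Sigma^jX\to X\pp\Sigma^jG_{[n]}(X)$ forces $(1\pp\Sigma^jq'_n)^*=0$; Proposition \ref{MVlemma} then yields $(1\pp q'_n)^*=0$, hence $((1\pp q_n)(1\pp\Delta_n))^*=0$ and $\nil_R(X)\le n$. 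Letting $j$ range gives $\nil_R(X)\le\sigma\TC(X)$. The two field inequalities fall out of the same circle of ideas: a homotopy section of $1\pp\Sigma^jp_n$ makes $H_*(1\pp\Sigma^jp_n)$ surjective, whence $H_*(1\pp p_n)$ is surjective by Proposition \ref{MVlemma} and $e^{\TC}_k(X)\le\sigma\TC(X)$; conversely, if $e^{\TC}_k(X)\le n$ then $H_*(1\pp p_n)$ is surjective, the homology sequence of the left-hand cofibration gives $H_*(1\pp q'_n)=0$, and over a field universal coefficients upgrade this to $(1\pp q'_n)^*=0$, so again $\nil_k(X)\le n$.

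I expect the main obstacle to be the displayed reduced-diagonal identity. Establishing it means building the fibrewise cross product in the section-relative setting and verifying, by restricting to each fibre $(X,x)$ and comparing with the classical computation on the smash $\wedge^n(X,x)$, that $(1\pp q_n)(1\pp\Delta_n)$ really does realise the cup product on $H^*(X\times X,\Delta(X);R)$; this requires unwinding the definitions of $1\pp\Delta_n$ and of the fibrewise quotient $1\pp q_n$. A smaller point, needed in the $\sigma\TC$ step, is that the section $s$ provided by the definition of $\sigma^j\TC$ is only fibrewise; one checks that the fibrewise pointed homotopy $(1\pp\Sigma^jp_n)s\simeq 1$ still makes $(1\pp\Sigma^jp_n)^*$ split injective on relative cohomology, since relative cohomology is a fibrewise pointed homotopy invariant and the composite is pointed-homotopic to the identity.
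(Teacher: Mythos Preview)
Your proposal is correct and follows essentially the same route as the paper's proof: both reduce everything to the vanishing of $((1\pp q_n)(1\pp\Delta_n))^*$ on relative cohomology, obtain that vanishing from injectivity of $(1\pp p_n)^*$ (equivalently triviality of $(1\pp q'_n)^*$) via the cofibration sequence and Proposition~\ref{MVlemma}, and handle the field case by dualising surjectivity of $H_*(1\pp p_n)$. The reduced-diagonal identity you flag as the main obstacle is simply asserted in the paper (``the composition induces on cohomology precisely the $n$-fold cup-product''), so your extra care there goes beyond what the paper supplies but is not a different approach.
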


\proof
If we compare the inequalities above with those from Proposition \ref{basic}, we see that we only have to show the 
following four:
\begin{center}\begin{tabular}{ll}
(a) $\nil_R(X)\leq\sigma\TC(X)$,& (b) $\nil_R(X)\leq\cTC(X)$,\\
(c) $\nil_k(X)\leq e_k^{\TC}(X)$,&(d) $e_k^{\TC}(X)\leq\sigma\TC(X)$.
\end{tabular}\end{center}
\begin{enumerate}
\item[(a)] Recall that $\sigma\TC(X)=\inf_{j\in\mathbb{N}}\sigma^j\TC(X)$. It therefore suffices to show that 
$\nil_R(X)\leq\sigma^j\TC(X)$ for all $j\in\mathbb{N}$ and this will imply (a). Fix $j$. 
If $\sigma^j\TC(X)=\infty$, then the statement obviously holds. So, let $\sigma^j\TC(X)=n<\infty$. Then there 
exists a map
$$s\colon X\pp\Sigma^jX\rightarrow X\pp\Sigma^jG_n(X),$$
such that $(1\pp\Sigma^jp_n)s\simeq 1$. We can 
conclude that the induced maps
$$H^*(1\pp\Sigma^jp_n)\colon H^*(X\pp\Sigma^jX)\rightarrow H^*(X\pp\Sigma^jG_n(X))$$
are injective. By Proposition \ref{MVlemma} the maps 
$H^*(1\pp p_n)$
are injective, so the maps $H^*(1\pp q'_n)$ are trivial. We conclude that 
$H^*(1\pp q_n)H^*(1\pp\Delta_n)=0$. Observe that the composition $(1\pp\Delta_n)(1\pp q_n)$ induces on cohomology 
precisely the $n$-fold cup-product on elements of positive dimension. So, $\nil_R(X)\leq n$.
\item[(b)] Let $\cTC(X)=n$. Then $(1\pp\Sigma q_n)(1\pp\Sigma\Delta_n)\simeq *$, so 
$$0=H^*((1\pp\Sigma q_n)(1\pp\Sigma\Delta_n))=H^*((1\pp q_n)(1\pp\Delta_n)).$$
Again by definition $\nil_R(X)\leq n$.
\item[(c)] Assume that $e^{\TC}_k(X)=n$ and the maps $H_*(1\pp p_n)$ are surjective. Then $H_*(1\pp q'_n)=0$ and 
$k$ is a field, so $H^*(1\pp q'_n)=0$. We conclude that $H^*(1\pp q_n)H^*(1\pp\Delta_n)=0$ and $\nil_k(X)\leq n$.
\item[(d)] The idea in this case is the same as in (a). Let $\sigma^j\TC(X)=n<\infty$. Then there exists a map 
$$s\colon X\pp\Sigma^jX\rightarrow X\pp\Sigma^jG_n(X),$$
such that $(1\pp\Sigma^jp_n)s\simeq 1$. Hence, 
$H_*(1\pp\Sigma^jp_n)$ are surjective. By Proposition \ref{MVlemma} the maps $H_*(1\pp p_n)$ are surjective and we 
have $e_k^{\TC}(X)\leq n$.
\end{enumerate}
\endproof


\section{Appendix}

In Section \ref{sec2} we used a common notation for a number of different constructions of fibrewise pointed spaces. 
In all cases the fibres were given by some functorial construction applied to a fixed space but with a varying choice 
of basepoints. The underlying intuition is that the base space acts on the fibres by 'moving around' the basepoints 
and that this action gives rise to some kind of a semi-direct product of the base with the fibre.

We are going to put this intuition on a firm basis by describing a general construction 
which starts with a continuous endofunctor $\Phi$ on the category of pointed topological spaces, and yields as a result 
a fibrewise pointed space $X\pp\Phi(X)$, whose fibres are the spaces $\Phi(X,x)$ for a varying choice of the basepoint $x$. 
To achieve the desired result we are going to assume that $X$ admits a closed embedding into some Euclidean space (or intrinsically, 
that $X$ is separable metric, locally compact and finite-dimensional). Such spaces are sufficiently general for  
the applications that we have in mind. 

Recall that an endofunctor $\Phi$ on the category of pointed topological spaces is \emph{continuous} if 
for each continuous function $f\colon Z\times X\rightarrow Y$ the corresponding function
$$\hat{f}\colon Z\times\Phi(X)\rightarrow \Phi(Y),$$
$$\hat f (y,u):=[\Phi(f(z,-))](u)$$
is continuous (cf. \cite[Chapter 2]{JamesGen}, see also \cite{IS2} for a slightly 
different version of continuity).

Let us assume that $X$ admits a closed embedding $i\colon X\rightarrow\mathbb{R}^m$ for some positive integer 
$m\in\mathbb{N}$ and define a family of pointed maps $\{i_x\}_{x\in X}$ by
$$\xymatrixcolsep{1.3cm}\xymatrix{
i_x\colon (X,x)\ar[r]^-{i} & (\mathbb{R}^m,i(x))\ar[r]^-{\tau_{-i(x)}} &(\mathbb{R}^m,0),
}$$
where $\tau_{-i(x)}\colon\mathbb{R}^m\rightarrow\mathbb{R}^m$ is the translation map that sends $i(x)$ to the origin $0$.
We then define $X\pp\Phi(X)$ to be the set 
$$\coprod_{x\in X}\Phi(X,x)$$ 
endowed with the initial topology  induced by the function
$$\xymatrixcolsep{1.5cm}\xymatrix{
\coprod_{x\in X}\Phi(X,x)\ar[r]^-{\coprod\Phi(i_x)} & X\times\Phi(\mathbb{R}^m,0).
}$$
(where $X\times\Phi(\mathbb{R}^m,0)$ has the product topology). The projection $p\colon X\pp\Phi(X)\to X$ is defined by 
sending each $\Phi(X,x)$ to $x$, while the section $s\colon X\to X\pp\Phi(X)$ is obtained by mapping each $x\in X$ to the basepoint of $\Phi(X,x)$.
We obtain the following commutative diagram
$$\xymatrixcolsep{2cm}\xymatrix{
X\pp\Phi(X)=  \coprod_{x\in X}\Phi(X,x)\ar[r]^-{\coprod\Phi(i_x)}\ar@<-1ex>[d]_p & X\times\Phi(\mathbb{R}^m,0)\ar@<-1ex>[d]_{\rm pr}\\
X \ar@{=}[r] \ar@<-1ex>[u]_s & X \ar@<-1ex>[u]_{s_0}
}$$
It is not difficult to check that both $p$ and $s$ are continuous. In fact, for an open set $U\subseteq X$ we have 
$p^{-1}(U)=(\coprod\Phi(i_x))^{-1}(U\times \Phi(\mathbb{R}^m))$ which is 
open by the definition of the topology on $X\pp\Phi(X)$. Similarly, every open set in $X\pp\Phi(X)$ is of the form 
$(\coprod\Phi(i_x))^{-1}(V)$ for some open set $V\subseteq X\times\Phi(\mathbb{R}^m,0)$, so 
$s^{-1}((\coprod\Phi(i_x))^{-1}(V))=s_0^{-1}(V)$ is open in $X$.

Observe that the fibre over $x\in X$ of the fibrewise space $X\pp\Phi(X)$ is the set $\Phi(X,x)$ endowed with the initial topology determined by the map 
$$\Phi(i_x)\colon \Phi(X,x)\to \Phi(\mathbb{R}^m,0),$$
which may in general be weaker than the topology on $\Phi(X,x)$. We will say that the endofunctor
$\Phi$ \emph{preserves Euclidean subspaces} if for every subspace $i\colon X\hookrightarrow \mathbb{R}^m$, and every $x\in X$ the space $\Phi(X,x)$ has the initial
topology with respect to the map $\Phi(i)\colon \Phi(X,x)\to\Phi(\mathbb{R}^m,x)$. 

\begin{remark}
In most applications (like those in Section \ref{sec2}) the functor $\Phi$ preserves injective maps, and then the above definition simply states that $\Phi$ 
preserves the subspace topology. One should also keep in mind that the property is often non-trivial, especially when the functor $\Phi$ involves quotient 
topologies, which are not well-behaved with respect to subspace topologies.
\end{remark}
The proof of the following proposition is now obvious. 

\begin{prop}
If $\Phi$ is an endofunctor of pointed topological spaces that preserves Euclidean subspaces, then $X\pp\Phi(X)$ is a fibrewise pointed space whose 
fibre over each $x\in X$ is the space $\Phi(X,x)$. 
\end{prop}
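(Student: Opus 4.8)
The plan is to verify the two assertions of the proposition separately: first that $X\pp\Phi(X)$ carries a genuine fibrewise pointed structure, and second that its fibre over each $x$ is $\Phi(X,x)$ \emph{with its intrinsic topology}. The first assertion is almost entirely contained in the preceding discussion, where the projection $p$ and the section $s$ were already shown to be continuous. I would only add the observation that $p\circ s=\mathrm{I}_X$, which is immediate since $s(x)$ is by definition the basepoint of the summand $\Phi(X,x)$, and this basepoint lies over $x$ under $p$. Combined with the continuity already in hand, this exhibits $X\pp\Phi(X)$ as a fibrewise pointed space, so all the real content lies in identifying the fibre topology.

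For the fibre, the plan is to trace the topology through the two maps defining the construction. Since $X\pp\Phi(X)$ carries the initial topology induced by $\coprod\Phi(i_x)\colon\coprod_{x\in X}\Phi(X,x)\to X\times\Phi(\mathbb{R}^m,0)$, the subspace topology on $p^{-1}(x)=\Phi(X,x)$ is the initial topology determined by the restriction of this map to the summand $\Phi(X,x)$. That restriction sends $u$ to $(x,\Phi(i_x)(u))$, so its image lies in $\{x\}\times\Phi(\mathbb{R}^m,0)$; as the open sets it pulls back are exactly the sets $\Phi(i_x)^{-1}(V)$, the fibre topology is precisely the initial topology determined by $\Phi(i_x)\colon\Phi(X,x)\to\Phi(\mathbb{R}^m,0)$. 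Next I would eliminate the translation: recall $i_x=\tau_{-i(x)}\circ i$, and since $\tau_{-i(x)}$ is a pointed homeomorphism of $\mathbb{R}^m$ and $\Phi$ is a functor, $\Phi(\tau_{-i(x)})\colon\Phi(\mathbb{R}^m,i(x))\to\Phi(\mathbb{R}^m,0)$ is a homeomorphism. Postcomposition by a homeomorphism does not alter the initial topology a map induces, so the initial topology with respect to $\Phi(i_x)=\Phi(\tau_{-i(x)})\circ\Phi(i)$ coincides with the one with respect to $\Phi(i)\colon\Phi(X,x)\to\Phi(\mathbb{R}^m,x)$. Finally I would invoke the hypothesis that $\Phi$ preserves Euclidean subspaces, which says exactly that this last initial topology is the given topology on $\Phi(X,x)$. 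Chaining these equalities yields the claimed identification of the fibre.

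I expect no serious obstacle here, which is precisely why the statement is asserted to be obvious once the appendix machinery is set up. The single nontrivial input is the \emph{preserves Euclidean subspaces} hypothesis, which supplies the one equality that is not formal, namely that the topology pulled back from $\Phi(\mathbb{R}^m)$ agrees with the intrinsic topology on $\Phi(X,x)$. Everything else is bookkeeping: the stability of initial topologies under passage to subspaces and under postcomposition by homeomorphisms, together with the functoriality that turns the translation $\tau_{-i(x)}$ into an isomorphism after applying $\Phi$. The only point requiring care is keeping straight which topology is being compared at each step so that the chain of identifications is not broken.
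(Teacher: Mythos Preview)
Your proposal is correct and follows exactly the line the paper has in mind: the paper declares the proof obvious, having already verified continuity of $p$ and $s$ and observed that the fibre over $x$ carries the initial topology induced by $\Phi(i_x)$; your argument supplies precisely the missing routine step, namely that postcomposition with the homeomorphism $\Phi(\tau_{-i(x)})$ identifies this with the initial topology induced by $\Phi(i)$, which the hypothesis then matches with the intrinsic topology on $\Phi(X,x)$.
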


Although we must choose a specific embedding $i\colon X\to \mathbb{R}^m$ for the construction of $X\pp\Phi(X)$, the following proposition 
shows that the result does not depend on that choice.

\begin{prop}
Let $\Phi$ be a continuous endofunctor on the category of pointed topological spaces. If $\Phi$ preserves Euclidean subspaces, then the definition of the 
fibrewise pointed space $X\pp\Phi(X)$ is independent of the choice of a closed embedding $i\colon X\to\mathbb{R}^m$.
\end{prop}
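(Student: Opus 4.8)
The plan is to compare any two closed embeddings by passing through their product embedding. Since the underlying set $\coprod_{x\in X}\Phi(X,x)$, the projection $p$ and the section $s$ are defined without reference to the embedding, only the topology on this set is in question; and by the preceding proposition the hypothesis that $\Phi$ preserves Euclidean subspaces guarantees that each choice yields a bona fide fibrewise pointed space with fibre $\Phi(X,x)$. Thus it suffices to show that two closed embeddings $i\colon X\to\mathbb R^m$ and $j\colon X\to\mathbb R^{m'}$ induce the same topology. Write $\tau_i$, $\tau_j$ for these topologies, that is, the initial topologies on $S=\coprod_x\Phi(X,x)$ with respect to $F_i=\coprod_x\Phi(i_x)$ and $F_j=\coprod_x\Phi(j_x)$. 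I would form the product embedding $k=(i,j)\colon X\to\mathbb R^{m+m'}$; a routine check shows $k$ is again a closed embedding, since its image is the graph of the continuous map $j\circ i^{-1}$ over the closed set $i(X)$, and that $k_x=(i_x,j_x)$. It then suffices to prove $\tau_i=\tau_k$, since the symmetric argument gives $\tau_j=\tau_k$ and hence $\tau_i=\tau_j$.

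The easy inclusion $\tau_i\subseteq\tau_k$ uses only functoriality. The linear pointed projection $\rho\colon(\mathbb R^{m+m'},0)\to(\mathbb R^m,0)$, $(u,v)\mapsto u$, satisfies $\rho\circ k_x=i_x$, hence $\Phi(\rho)\circ\Phi(k_x)=\Phi(i_x)$ and therefore $F_i=(1\times\Phi(\rho))\circ F_k$. As $F_i$ factors through $F_k$ by a continuous map, every $\tau_i$-open set is $\tau_k$-open. The substantial point, and the main obstacle, is the reverse inclusion $\tau_k\subseteq\tau_i$: here one must recover the discarded $j$-coordinate from the $i$-data, which no purely functorial argument can do. The idea is to combine the Tietze extension theorem with the continuity of $\Phi$. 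Extend the continuous map $j\circ i^{-1}\colon i(X)\to\mathbb R^{m'}$, defined on the closed set $i(X)$, to a continuous map $\widetilde\phi\colon\mathbb R^m\to\mathbb R^{m'}$, and set
$$g\colon X\times\mathbb R^m\to\mathbb R^{m+m'},\qquad g(x,v)=(v,\ \widetilde\phi(v+i(x))-j(x)).$$

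A direct computation then gives $g(x,0)=0$ and $g(x,i_x(y))=k_x(y)$ for all $y\in X$, so that $g(x,-)\colon(\mathbb R^m,0)\to(\mathbb R^{m+m'},0)$ is pointed and $g(x,-)\circ i_x=k_x$; applying $\Phi$ yields $\Phi(g(x,-))\circ\Phi(i_x)=\Phi(k_x)$. Since $\Phi$ is continuous, the associated map $\widehat g\colon X\times\Phi(\mathbb R^m,0)\to\Phi(\mathbb R^{m+m'},0)$, $\widehat g(x,u)=[\Phi(g(x,-))](u)$, is continuous, and hence so is $H\colon X\times\Phi(\mathbb R^m,0)\to X\times\Phi(\mathbb R^{m+m'},0)$, $H(x,u)=(x,\widehat g(x,u))$. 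One checks that $H\circ F_i=F_k$, so $F_k$ factors continuously through $F_i$ and every $\tau_k$-open set is $\tau_i$-open, giving $\tau_k\subseteq\tau_i$. Combined with the previous paragraph this yields $\tau_i=\tau_k$, and the proposition follows. I expect the only delicate points to be verifying that $k$ is a closed embedding and checking that the continuity axiom for $\Phi$ applies to $g$ (for which the pointedness $g(x,0)=0$ is exactly what is needed); the use of closedness of $i(X)$ in the Tietze step is where the closed-embedding hypothesis is genuinely consumed.
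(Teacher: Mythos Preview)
Your argument is correct and follows the same broad strategy as the paper---pass to a common ambient Euclidean space, use Tietze to extend $j\circ i^{-1}$, and then invoke the continuity of $\Phi$ to transport the factorization to the level of $\Phi$---but your organization is somewhat cleaner than the paper's. The paper first shows that enlarging the codomain via the standard inclusion $\iota\colon\mathbb{R}^m\hookrightarrow\mathbb{R}^{m+n}$ does not change the induced topology, and then constructs an explicit ambient \emph{homeomorphism} $h\colon\mathbb{R}^{m+n}\to\mathbb{R}^{m+n}$ (using two Tietze extensions, one of $j\circ i^{-1}$ and one of $i\circ j^{-1}$) that carries the enlarged $i$ to the enlarged $j$; the family $h_x(u)=h(u+i(x))-j(x)$ then yields a fibrewise homeomorphism $\bar h$ on $X\times\Phi(\mathbb{R}^{m+n},0)$. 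You instead compare each of $i,j$ directly to the product embedding $k=(i,j)$, and in each comparison you only need continuous maps in the two directions (the projection for one, your $g$ for the other) rather than a single invertible one. This buys you two small economies: you never need to verify that anything is a homeomorphism, and your independence argument does not actually invoke the ``preserves Euclidean subspaces'' hypothesis (the paper uses it in its first step, though in fact that step can also be handled by your projection/inclusion trick). The paper's approach, on the other hand, yields the slightly stronger information that the two embeddings are ambiently equivalent, which is not needed here but is a pleasant fact in its own right.
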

\begin{proof}
Let us denote by $\iota\colon\mathbb{R}^m=\mathbb{R}^m\times 0\hookrightarrow \mathbb{R}^{m+n}$ the standard inclusion. Since $\Phi$ preserves Euclidean subspaces, 
the topology on $\Phi(\mathbb{R}^m,0)$ is precisely 
the initial topology with respect to the map $\Phi(\iota)\colon\Phi(\mathbb{R}^m,0)\to\Phi(\mathbb{R}^{m+n},0)$. Then clearly $X\times \Phi(\mathbb{R}^m,0)$ has the 
initial topology with respect to the map 
$$1_X \times \Phi(\iota)\colon X\times \Phi(\mathbb{R}^m,0)\longrightarrow X\times \Phi(\mathbb{R}^{m+n},0).$$
We may now compare the topologies on $\coprod\Phi(X,x)$ induced by the embeddings $i\colon X\to\mathbb{R}^m$ and $\iota\circ i\colon X\to \mathbb{R}^{m+n}$.
The preceding result together with the commutativity of the following diagram
$$\xymatrixcolsep{2cm}\xymatrix{
\coprod_{x\in X}\Phi(X,x)\ar[r]^-{\coprod\Phi(i_x)}\ar[dr]_{\coprod\Phi((\iota\circ i)_x)\ \ \ } & X\times\Phi(\mathbb{R}^m,0)\ar[d]^{1\times\Phi(\iota)}\\
 & X\times \Phi(\mathbb{R}^{m+n},0)
}$$
implies that the two topologies in fact coincide. We may therefore arbitrarily expand the codomain of the embedding. 

Let us now compare the embeddings $i\colon X\to \mathbb{R}^m$ and $j\colon X\to \mathbb{R}^n$, or equivalently (in view of the above considerations) the embeddings
$i\colon X\to \mathbb{R}^m\times 0\subset\mathbb{R}^{m+n}$ and $j\colon X\to 0\times \mathbb{R}^m\subset\mathbb{R}^{m+n}$. It is well-known that any two such 
embeddings are equivalent by an ambient homeomorphism, which can be defined as follows. We first use the Tietze extension theorem to extend the map $j\circ i^{-1}\colon i(X)\to\mathbb{R}^{m+n}$ 
to a map $\varphi\colon \mathbb{R}^{m+n}\to\mathbb{R}^{m+n}$, and to extend the map $i\circ j^{-1}\colon j(X)\to\mathbb{R}^{m+n}$ to a map 
$\psi\colon \mathbb{R}^{m+n}\to\mathbb{R}^{m+n}$. Then one can easily check that the map $h\colon\mathbb{R}^m\times\mathbb{R}^n\to\mathbb{R}^m\times\mathbb{R}^n$, given 
by $h(u,v):=(u-\psi(v+\varphi(u)),v+\varphi(u))$ is a homeomorphism and that the following diagram commutes
$$\xymatrixcolsep{1.5cm}\xymatrixrowsep{1.5cm}\xymatrix{
X \ar[r]^-j \ar[d]_i & \mathbb{R}^{m+n}\\
\mathbb{R}^{m+n} \ar[ur]_h
}$$
For every $x\in X$ we define a homeomorphism $h_x\colon\mathbb{R}^{m+n}\to\mathbb{R}^{m+n}$ by
$$h_x(u):=h(u+i(x))-j(x),$$
so that $h_x\circ i_x=j_x$,
and we obtain the commutative diagram 
$$\xymatrixcolsep{2cm}\xymatrixrowsep{1.5cm}\xymatrix{
\coprod_{x\in X}\Phi(X,x)\ar[r]^-{\coprod\Phi(j_x)}\ar[d]_{\coprod\Phi(i_x)} & X\times\Phi(\mathbb{R}^{m+n},0)\\
X\times \Phi(\mathbb{R}^{m+n},0)\ar[ur]_{\bar{h}}
}$$
where $\bar h$ is given by $\bar h(x,u):=(x,\Phi(h_x)(u))$; it is a homeomorphism by the continuity of the functor $\Phi$.
We conclude that the topologies on $\coprod_{x\in X}\Phi(X,x)$ induced by the two embeddings coincide.
\end{proof}

For a subspace $A\subseteq X$ we will denote by $A\pp\Phi(X)$ the restriction of the fibrewise pointed space $X\pp\Phi(X)$ over $A$. Then we have the following important property:

\begin{prop}
Let $\Phi$ be as above and let $A\subseteq X$. If $A\pp X$ is trivial as a fibrewise pointed space then so is $A\pp\Phi(X)$. 
\end{prop}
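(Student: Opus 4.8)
The plan is to start from the data that a trivialisation of $A\pp X$ provides and to transport it through $\Phi$ fibrewise. A fibrewise pointed homeomorphism $A\pp X\to A\times F$ is, over each $a\in A$, a pointed homeomorphism of $(X,a)$ with the fixed fibre; restricting over any chosen $a_0\in A$ identifies $F$ with $(X,a_0)$, so after composing with this identification I may assume the trivial model is $A\times(X,x_0)$ with $x_0:=a_0\in X$ and use the \emph{same} closed embedding $i\colon X\to\mathbb{R}^m$ for every fibre $(X,a)$ as well as for $(X,x_0)$. The trivialisation then amounts to a continuous map $\phi\colon A\times X\to X$, $\phi(a,y)=\phi_a(y)$, with each $\phi_a\colon(X,a)\to(X,x_0)$ a pointed homeomorphism and $(a,z)\mapsto\phi_a^{-1}(z)$ also continuous (this is just continuity of the inverse homeomorphism). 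Applying $\Phi$ fibrewise yields the candidate $\Psi\colon A\pp\Phi(X)\to A\times\Phi(X,x_0)$ which over $a$ is $\Phi(\phi_a)$. It is immediate that $\Psi$ is a fibrewise bijection and that it carries section to section (since each $\Phi(\phi_a)$ is pointed), so the whole problem reduces to the continuity of $\Psi$ and of its fibrewise inverse $u\mapsto\Phi(\phi_a^{-1})(u)$.

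To prove continuity I would descend to the ambient Euclidean space, exactly as in the proof that $X\pp\Phi(X)$ is independent of the embedding. Consider the set $D=\{(a,w)\in A\times\mathbb{R}^m\mid w+i(a)\in i(X)\}$, which is closed because it is the preimage of the closed set $i(X)$ under $(a,w)\mapsto w+i(a)$, and the map $p\colon D\to\mathbb{R}^m$ determined by $p\big(a,i_a(y)\big)=i_{x_0}(\phi_a(y))$; writing $y=i^{-1}(w+i(a))$ exhibits $p$ as a composite of continuous maps, and $p(a,0)=i_{x_0}(\phi_a(a))=i_{x_0}(x_0)=0$. Since $X$ is metrizable, $A\times\mathbb{R}^m$ is normal, so the Tietze extension theorem produces a continuous $f\colon A\times\mathbb{R}^m\to\mathbb{R}^m$ extending $p$ coordinatewise; as $f$ agrees with $p$ along $A\times\{0\}$ we get $f(a,0)=0$, so each $f(a,-)\colon(\mathbb{R}^m,0)\to(\mathbb{R}^m,0)$ is pointed. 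I expect this to be the main obstacle: one must manufacture a single ambient family realising the homeomorphisms $\phi_a$, which a priori live only on $X$ with a moving basepoint, while staying continuous in the parameter $a$; the Tietze extension over the closed set $D$ is precisely what reconciles the moving basepoints with the fixed basepoint $0$ required by the continuity of $\Phi$.

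With $f$ at hand, continuity of the functor $\Phi$ gives a continuous $\hat f\colon A\times\Phi(\mathbb{R}^m,0)\to\Phi(\mathbb{R}^m,0)$, $(a,v)\mapsto\Phi(f(a,-))(v)$. Recall that $A\pp\Phi(X)$ carries the initial topology induced by $u\in\Phi(X,a)\mapsto(a,\Phi(i_a)(u))$ and $A\times\Phi(X,x_0)$ the initial topology induced by $1\times\Phi(i_{x_0})$. The defining relation $f(a,-)\circ i_a=i_{x_0}\circ\phi_a$ together with functoriality of $\Phi$ yields, for $u\in\Phi(X,a)$,
$$(1\times\Phi(i_{x_0}))\big(\Psi(u)\big)=\big(a,\Phi(i_{x_0}\circ\phi_a)(u)\big)=\big(a,\hat f(a,\Phi(i_a)(u))\big),$$
i.e. the composite of $\Psi$ with the defining map of the target equals the composite of $\hat f$ with the defining map of the source; since the latter is continuous and the target has the initial topology, $\Psi$ is continuous. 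Running the identical construction with $\phi_a^{-1}$ in place of $\phi_a$ gives a continuous fibrewise pointed map in the opposite direction, and functoriality ($\Phi(\phi_a)\circ\Phi(\phi_a^{-1})=\Phi(\mathrm{id})=\mathrm{id}$, and symmetrically) shows the two are mutually inverse. Hence $\Psi$ is a fibrewise pointed homeomorphism $A\pp\Phi(X)\cong A\times\Phi(X,x_0)$, so $A\pp\Phi(X)$ is trivial.
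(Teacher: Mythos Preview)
Your argument is correct and follows the same strategy as the paper: apply $\Phi$ fibrewise to the trivialising family $(f_a)_{a\in A}$ and invoke continuity of $\Phi$. The paper compresses the whole thing into the single line ``the continuity of $\Phi$ implies that $\coprod\Phi(f_a)$ is a homeomorphism'', whereas you correctly unpack what this entails --- in particular the Tietze extension to $(\mathbb{R}^m,0)$ needed to absorb the moving basepoints before the continuity hypothesis can be applied --- exactly as the paper itself does in the preceding proof of independence of the embedding.
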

\begin{proof}
Triviality of $A\pp X$ means that there is a homeomorphism $f\colon A\pp X\to A\times (X,x_0)$ for which the following diagram commutes:
$$\xymatrixcolsep{2cm}\xymatrix{
A\pp X=  \coprod_{x\in A} (X,x)\ar[r]^-f\ar@<-1ex>[d]_{\rm pr} & A\times (X,x_0)\ar@<-1ex>[d]_{\rm pr}\\
A \ar@{=}[r] \ar@<-1ex>[u]_\Delta & X \ar@<-1ex>[u]_{s_0}
}$$
Clearly, $f$ must be of the form $f(a,x)=(a,f_a(x))$ so we can use $\Phi$ to construct another commutative diagram:
$$\xymatrixcolsep{2cm}\xymatrix{
A\pp \Phi(X)=  \coprod_{x\in A} \Phi(X,x)\ar[r]^-{\coprod\Phi(f_a)} \ar@<-1ex>[d]_p & A\times \Phi(X,x_0)\ar@<-1ex>[d]_{\rm pr}\\
A \ar@{=}[r] \ar@<-1ex>[u]_s & X \ar@<-1ex>[u]_{s_0}
}$$
The continuity of $\Phi$ implies that $\coprod\Phi(f_a)$ is a homeomorphism, which proves that $A\pp\Phi(X)$ is fibrewise pointed trivial.
\end{proof}

Now we know that for a suitable choice of $\Phi$ and $X$ the construction of the fibrewise pointed space $X\pp\Phi(X)$ is well-defined. It remains to
show that constructions introduced in Section \ref{sec2} are special instances of it.

\begin{example} (The product and the fat wedge)
Fix an embedding $i\colon X\to\mathbb{R}^m$ and let $\Phi=\Pi^n$ be the $n$-fold product functor. 
Clearly $\Pi^n$ is continuous and preserves Euclidean subspaces so we may define $X\pp\Pi^nX$ as above. On the other hand in Section \ref{sec2} we defined 
the fibrewise pointed space $X\pp\Pi^nX$ as the product space $X\times X^n$ with suitable projection and section maps. To compare the two constructions observe 
that the space $X\times X^n$, as a set, coincides with $\coprod_{x\in X}\Pi^n(X,x)$, and that we have the following diagram
$$\xymatrixcolsep{1.5cm}\xymatrix{
\coprod_{x\in X}\Pi^n(X,x)\ar[r]^-{\coprod \Pi^n(i_x)}\ar@{=}[d] & X\times ((\mathbb{R}^m)^n,0^n)\\
X\times X^n \ar[ur]_j
}$$
where the map $j$ is given by 
$$j(x,y_1,\ldots,y_n)=(x,i(y_1)-i(x),\ldots,i(y_n)-i(x)).$$ 
To prove that the two definitions coincide it is sufficient to show that $j$ is an embedding. This is clear, since both $j$ and its inverse
$j^{-1}\colon j(X\times X^n)\to X\times X^n$, which is given by 
$$j^{-1}(x,u_1,\ldots,u_n)=(x,i^{-1}(u_1+i(x)),\ldots,i(u_n+i(x)))$$ 
are clearly continuous.

The argument for the fibrewise pointed fat-wedge is analogous. The functor $W^n$ is continuous and preserves Euclidean subspaces, so we may define 
$X\pp W^nX$ using the general construction, while in Section \ref{sec2} we defined $X\pp W^nX$ as the subspace 
$\{(x,y_1,\ldots,y_n)\in X\times X^n\mid \exists k\ y_k=x\}$ of $X\times X^n$. As for the $n$-fold products, we directly verify that $j\colon X\pp W^nX\to X\times W^n(\mathbb{R}^m,0)$
is an embedding, therefore the two definitions of $X\pp W^nX$ coincide (see Fig. \ref{sop}).
\end{example}

\begin{figure}[ht]\begin{center}\begin{pspicture}(0.2,-1.1)(3.9,4.8)
\psset{linewidth=0.5pt,dash=2pt 2pt,unit=1cm,labelsep=3pt} 
\definecolor{siva}{gray}{1}\definecolor{svsiva}{gray}{0.7}\definecolor{tesiva}{gray}{0}
\uput[r](2.1,-0.2){$\pr_1$}
\uput[r](3.5,-1.1){$I$}
\uput[r](3,0.3){$I\pp W^2 I$}
\pstThreeDCoor[xMax=3,yMax=3,zMax=3,drawing=false]
\psset{origin={1,2},Alpha=65,Beta=15}
\pstThreeDBox[linecolor=svsiva](0,0,0)(3,0,0)(0,3,0)(0,0,3)
\pstThreeDLine[arrows=->](1.5,1.5,-1)(1.5,1.5,-2)
\pstThreeDLine(1.5,0,-2.5)(1.5,3,-2.5)
\psset{linecolor=svsiva,linestyle=dashed}
\pstThreeDSquare(0,0,0)(3,0,0)(0,0,3)
\pstThreeDSquare(0,1,0)(3,0,0)(0,0,3)
\pstThreeDSquare(0,2,0)(3,0,0)(0,0,3)
\pstThreeDSquare(0,3,0)(3,0,0)(0,0,3)
\pstThreeDLine[linecolor=tesiva,linestyle=solid](1.5,0,1.5)(1.5,3,1.5)
\psset{linecolor=tesiva,linewidth=1.2pt,linestyle=solid}
\pstThreeDLine(1.5,0,1.5)(1.5,0,3)
\pstThreeDLine(0,0,1.5)(1.5,0,1.5)
\pstThreeDLine(2,1,1.5)(0.5,1,1.5)
\pstThreeDLine(1.5,1,2.5)(1.5,1,1)
\pstThreeDLine(2.5,2,1.5)(1,2,1.5)
\pstThreeDLine(1.5,2,0.5)(1.5,2,2)
\pstThreeDLine(1.5,3,1.5)(1.5,3,0)
\pstThreeDLine(1.5,3,1.5)(3,3,1.5)
\psset{linewidth=0.5pt}
\pstThreeDLine(1.5,0,3)(1.5,3,1.5)
\pstThreeDLine(1.5,0,1.5)(1.5,3,0)
\pstThreeDLine(0,0,1.5)(1.5,3,1.5)
\pstThreeDLine(1.5,0,1.5)(3,3,1.5)
\end{pspicture}\end{center}
\caption{\small An easy example of a fibrewise pointed (fat) wedge $X\pp W^nX$ (for $X=I$, $n=2$) that is not locally trivial as a fibrewise space.
The space $I\pp W^2 I$ has the subspace topology with respect to $\mathbb{R}\pp \Pi^2 \mathbb{R}=\mathbb{R}^3$. It consists of two 
rectangles which intersect along one of the diagonals. The fibre over each $x\in I$ is the wedge $W^2(I,x)$ obtained from 
two intervals by identifying the basepoints $x$. Note that the fibres over $0$ and $1$ are not homeomorphic to the fibres over the interior points of the interval.}
\label{sop}\end{figure}
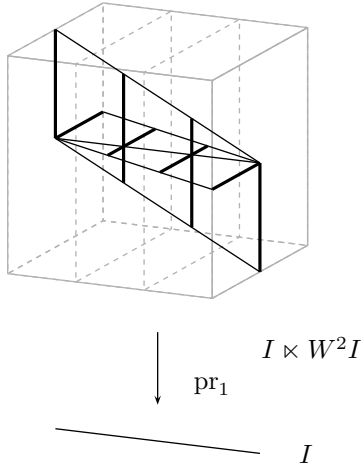

\begin{example} (The smash product) The $n$-fold smash product functor is continuous, and it preserves Euclidean subspaces by \cite[Theorem 6.2.1]{Dug}
(since $X$ is embedded in $\mathbb{R}^m$ as a closed subspace). In Section 2 we defined $X\pp\wedge^n X$ as the fibrewise quotient of the space $X\pp\Pi^nX$ 
over the subspace $X\pp W^n(X)$. Using the notation from the previous example we can form the following diagram
$$\xymatrixcolsep{1.6cm}\xymatrix{
X\pp W^mX \ar@{_(->}[d]_{1\pp\iota} \ar[r]^{j|} & X\times W^n(\mathbb{R}^m,0)\ar@{_(->}[d]^{1\times\iota}\\
X\pp \Pi^mX \ar[d]_{1\pp q} \ar[r]^j & X\times \Pi^n(\mathbb{R}^m,0) \ar[d]^{1\times q}\\
X\pp \wedge^mX \ar@{-->}[r]^{\bar j} & X\times \wedge^n(\mathbb{R}^m,0)
}$$
Observe that $1\pp q$ and $1\times q$ are both quotient maps, the first by definition, and the second because $X$ is locally compact. 
As above we conclude that the induced map $\bar j$ is an embedding, and that the two definitions of $X\pp\wedge^nX$ agree.
\end{example}

\begin{example}(The path space) We have formerly defined the fibrewise path space over $X$ as the path space $X^I$ together with 
the projection $X^I\to X$ given by the evaluation $\ev_0$, and the section $X\to X^I$, that to every $x\in X$ assigns the constant path at $x$.
As a set, we may identify $X^I$ with $\coprod_{x\in X}P(X,x)$, so in order to prove that $X^I=X\pp PX$ we must show that 
the map $j\colon X^I\to X\times P(\mathbb{R}^m,0)$, given by $j(\alpha):=\bigl(\alpha(0),i\circ\alpha-i\circ c_{\alpha(0)}\bigr)$ is an embedding. 
The map $j$ is clearly continuous, and so is its inverse $j^{-1}\colon j(X^I)\to X^I$, since it is given by $j^{-1}(x,\beta):=i^{-1}\circ (\beta+c_{i(x)})$.
\end{example}

\end{document}